\documentclass [12pt,a4paper]{amsart}
\usepackage[utf8]{inputenc}
\usepackage{amsfonts,amsmath,amsthm}
\usepackage{amssymb,latexsym}

\usepackage{latexsym}
\usepackage{mathrsfs}
\usepackage[hypcap]{caption}

\usepackage{enumitem}


%
\newtheorem{theorem}{Theorem}[section]

\newtheorem{lemma}{Lemma}[section]

\newtheorem{corollary}{Corollary}[section]
\newtheorem{proposition}[theorem]{Proposition}
\parindent=0pt

\usepackage{hyperref}



\usepackage[color=red!40]{todonotes}

\usepackage{tikz}
\usepackage{tikz-cd}
\usetikzlibrary{arrows, matrix}

 \newcommand\vol{\mathrm{vol}}
  \newcommand\area{\mathrm{area}}

\newcommand\sys{\mathrm{sys}_{1}}
\newcommand\sysg{\mathrm{sysg}}

\DeclareMathOperator{\SL}{SL}
\DeclareMathOperator{\N}{N}
\DeclareMathOperator{\GL}{GL}
\DeclareMathOperator{\SO}{SO}

\DeclareMathOperator{\inj}{inj}

\DeclareMathOperator{\Spin}{Spin}
\DeclareMathOperator{\arccosh}{arccosh}
\DeclareMathOperator{\Isom}{Isom}

\begin{document}

\title[systole of congruence coverings]{{\normalsize Systole of congruence coverings of arithmetic hyperbolic manifolds}}

\author[Plinio G. P. Murillo]{Plinio G. P. Murillo}

\thanks{\textit{Date:} \today}

\thanks{This work was supported by CAPES and CNPq}

\maketitle
\vspace{-3mm}
\begin{center}
With an appendix by Cayo Dória and Plinio G. P. Murillo
\end{center}
\vspace{5mm}

\begin{abstract}
In this paper we prove that, for any arithmetic hyperbolic $n$-manifold $M$ of the first type, the systole of most of the principal congruence coverings $M_{I}$ satisfy $$\sys(M_{I})\geq \frac{8}{n(n+1)}\log(\vol(M_{I}))-c,$$ where $c$ is a constant independent of $I$. This generalizes previous work of Buser and Sarnak, and Katz, Schaps and Vishne in dimension 2 and 3. As applications, we obtain explicit estimates for systolic genus of hyperbolic manifolds studied by Belolipetsky and the distance of homological codes constructed by Guth and Lubotzky. In an appendix together with Cayo Dória we prove that the constant $\frac{8}{n(n+1)}$ is sharp.

\end{abstract}  

\section{Introduction}

The {\it systole} of a Riemannian manifold $M$ is the length of a shortest non-contractible closed geodesic in $M$ and it is denoted by $\sys(M)$. In 1994, P. Buser and P. Sarnak constructed in \cite{BS94} the first explicit examples of surfaces with systole growing logarithmically with the genus. For this construction they used sequences of congruence coverings of an arithmetic compact Riemann surface. These examples were generalized in 2007 by M. Katz, M. Schaps and U. Vishne to congruence coverings of any compact arithmetic Riemann surfaces and arithmetic hyperbolic 3-manifolds \cite{KSV07}. They proved that any sequence of congruence covering $S_{i}$ of a compact arithmetic Riemann surface $S$ satisfy  

\begin{equation}\label{previousresult}
\sys(S_{i})\geq\frac{4}{3}\log(\area(S_{i}))-c \hspace{4mm} \mbox{as}\hspace{2mm} i\rightarrow\infty,
\end{equation}

where $c$ is independent of $i$. In dimension 3, the corresponding result which was also obtained in \cite{KSV07} has the constant $\frac{2}{3}$ instead of $\frac{4}{3}$.\\

It is known that a sequence of regular congruence coverings of a compact arithmetic hyperbolic manifold attains asymptotically the logarithmic growth of the systole (see \cite[3.C.6]{Gro96} or \cite[Sec.4]{GL13}), but the examples above are the only cases in which the explicit constant in the systole growth was known so far. In particular, it would be interesting to understand how the asymptotic constant depends on the dimension.\\

The purpose of this paper is to generalize the previous results to the sequences of principal congruence coverings $M_{I}$ of an arithmetic hyperbolic manifold $M$ of the first type. We show that if the dimension of $M$ is $n$ under a suitable condition on the ideals $I$ those sequences
eventually satisfy

\begin{equation}\label{result}
\sys(M_{I})\geq \frac{8}{n(n+1)}\log(\vol(M_{I}))-c,
\end{equation} 
where the constant $c$ is independent of the ideal $I$. We refer to Sections \ref{Arithmetic subgroups of Spin(1,n)} and \ref{congruencesubgroups} for the definitions and Theorem \ref{maintheorem} for the precise statement of the result.\\

This result give us the first examples of explicit constants for the growth of systole of sequences of congruence coverings of arithmetic hyperbolic manifolds in dimensions greater than three. In \cite{Murillo16}, the author studies the same problem for other non-positively curved arithmetic manifolds.\\

We would like to remark the following aspect in our approach. In order to generalize to higher dimensions Inequality \eqref{previousresult}, the most natural approach could be to consider congruence subgroups $\overline{\Gamma}(I)$ of arithmetic subgroups $\overline{\Gamma}$ in $\SO(1,n)^{\circ}$, and to study the systole of the quotient spaces $\overline{\Gamma}(I)\backslash\mathbb{H}^{n}$. However, the congruence coverings $M_{I}$ of the hyperbolic manifolds $M$ appearing in Theorem \ref{maintheorem} arise from congruence subgroups of arithmetic subgroups $\Gamma$ in $\Spin(1,n)$. The main reason for this is that the constant in the lower bound of the systole obtained from Inequality \eqref{result} matches with the previously known constants $\frac{4}{3}$ and $\frac{2}{3}$ in dimensions 2 and 3 proved in \cite{KSV07}. On the other hand, if $\overline{\Gamma}$ is an arithmetic subgroup in $\SO(1,n)^{\circ}$ of the first type, we could only prove that if $I$ is a prime ideal with norm sufficiently large, then the congruence subgroups $\overline{\Gamma}(I)\subset \overline{\Gamma}$ satisfy a weaker lower bound

\begin{equation}
\sys(\overline{\Gamma}(I)\backslash\mathbb{H}^{n})\geq\frac{4}{n(n+1)}\log(\vol(\overline{\Gamma}(I)\backslash\mathbb{H}^{n}))-d,
\end{equation}

where $d$ is a constant independent of $I$. The proof of this result is based on slightly different ideas and the details can be found in the author's PhD thesis \cite{MurilloThesis}.\\

This paper is organized as follows. We begin in Section \ref{Preliminaries} recalling basic facts about hyperbolic manifolds and the group $\Spin(1,n)$. We also define in this section arithmetic hyperbolic manifolds $M$ of the first type and their congruence coverings $M_{I}$. We then study the displacement of the action of $\Spin(1,n)$ on $\mathbb{H}^{n}$ in Section \ref{displacement}, and we estimate the length of closed geodesics of $M_{I}$ in terms of the norm of the ideal $I$ in Section \ref{lowerbound}. In Section \ref{index} we relate $\vol(M_{I})$ with the norm of the ideal $I$, and in Section \ref{proofoftheorem} the main result is proved. In the last two sections we apply our result in two different contexts: in Section \ref{systolicgenus} we give a more precise relation between the systolic genus and the volume of congruence coverings of arithmetic hyperbolic manifolds studied by M. Belolipetsky in \cite{Bel13}. In Section \ref{homologicalcodes}, we present an explicit lower bound for the distance of homological codes constructed by L. Guth and A. Lubotzky using arithmetic hyperbolic manifolds \cite{GL13}.\\

\textbf{Acknowledgements}. I would like to thank Mikhail Belolipetsky for his advice, encouragement and valuable suggestions on the preliminary versions of this work. I want to thank Cayo Dória for many fruitful discussions and collaboration. I also want to thank Alan Reid, Larry Guth, Uzi Vishne and Vincent Emery for their interest in the work and valuable suggestions.

\section{Preliminary material}\label{Preliminaries}

\subsection{Hyperbolic manifolds}\label{Arithmetic subgroups of SO(1,n)}

The hyperbolic $n$-space is the complete simply connected $n$-dimensional Riemannian manifold with the constant sectional curvature equal to $-1$. The \textit{hyperboloid model} of the hyperbolic $n$-space is given by
$$\mathbb{H}^{n} =\lbrace x\in \mathbb{R}^{n+1}; x_{0}^{2}-  x_{1}^{2} -\cdots - x_{n-1}^{2}-x_ {n}^{2}=1,  x_{0}>0\rbrace$$

with the metric $ds^{2}=-dx_{0}^{2}+\cdots + dx_{n}^{2}+dx_{n}^{2}$.\\

A \textit{hyperbolic manifold} $M$ is a complete Riemannian manifold of constant sectional curvature equal to $-1$. These manifolds are the quotients spaces $M=\overline{\Gamma}\backslash\mathbb{H}^{n}$, where $\overline{\Gamma}$ is a discrete torsion-free subgroup of $\Isom^{+}(\mathbb{H}^{n})$.\\

The group $\SO(1,n)$ of linear transformations preserving a quadratic form of the signature $(1,n)$ over $\mathbb{R}$ acts by isometries on the hyperbolic \\ $n$-space $\mathbb{H}^{n}$. The identity component $G=~\SO(1,n)^{\circ}$ is a real Lie group isomorphic to the orientation-preserving isometries $\Isom^{+}(\mathbb{H}^{n})$. Both these groups act linearly on the hyperboloid model of $\mathbb{H}^{n}$. This action is transitive and we can identify $\mathbb{H}^{n}=G/K$, where $K$ is the stabilizer of a point by the action of $G$.\\

A discrete subgroup $\Gamma$ of a Lie group $G$ is called a \textit{lattice} if the quotient $\Gamma \backslash G$ has finite measure with respect to a Haar measure of $G$. By the compactness of the subgroup $K\subset\SO(1,n)^{\circ}$, $\Gamma$ is a lattice in $\SO(1,n)^{\circ}$ if and only if the hyperbolic manifold $\Gamma\backslash\mathbb{H}^{n}$ has finite volume.

\subsection{The group $\textbf{Spin}$}\label{Clifford algebras and spin group}

The universal covering of $\SO(1,n)^{\circ}$ is the spin group $\Spin(1,n)$. In this section we recall the basic facts about the group $\Spin_{f}$ associated to a quadratic form $f$. We will refer the reader to \cite{Bourbaki59}, \cite{Chevalley54} and \cite{Dieudonne71} for further details.\\

Let $k$ be a field of characteristic not equal to $2$, and let $E$ be an $n$-dimensional vector space over $k$. Suppose that $f:E\rightarrow k$ is a non-degenerate quadratic form with associated bilinear form $\Phi$. If $T(E)$ denotes the tensor algebra of $E$ and $\mathfrak{a}_{f}$ the two-sided ideal of $T(E)$ generated by the elements $x\otimes y + y\otimes x-2\Phi(x,y)$, the $\textit{Clifford algebra of f}$ is defined as the quotient $\mathscr{C}(f,k)=T(E)/\mathfrak{a}_{f}$.\\

The Clifford algebra of $f$ is a unitary associative algebra over $k$ with a canonical map $j:E\rightarrow \mathscr{C}(f,k)$ such that $j(x)^{2}=f(x)$ for any $x\in E$, which satisfies the following universal property:  Given an associative $k$-algebra $R$ with unity, for any map $g:E\rightarrow R$ satisfying $g(x)^{2}=f(x)$ there exists a unique $k$-algebra homomorphism $\bar{g}:\mathscr{C}(f,k)\rightarrow R$ such that $\bar{g}\circ i=g$. \\

We identify $E$ with its image $j(E)$ in $\mathscr{C}(f,k)$ and $k$ with $k\cdot1\subset \mathscr{C}(f,k)$. If we choose an orthogonal basis $e_{1},\ldots,e_{n}$ of $E$ with respect to $\Phi$ then in $\mathscr{C}(f,k)$ we have the relations $e_{\nu}^{2}=f(e_{\nu})$ and $e_{\nu}e_{\mu}=-e_{\mu}e_{\nu}$ for $\mu,\nu =1,\ldots,n, \mu \neq \nu$. Let $\mathscr{A}_{n}$ be the set of subsets of the set $\lbrace 1,\ldots,n\rbrace$ and let $\mathscr{P}_{n}$ the subset of $\mathscr{A}_{n}$ given by the ordered sets $M=\lbrace \mu_{1},\ldots, \mu_{k}\rbrace\in\mathscr{A}_{n} $ with $\mu_{1}<\cdots <\mu_{k}$. For any $M\in\mathscr{P}_{n}$ we write $e_{M}=e_{\mu_{1}}\cdot\ldots\cdot e_{\mu_{k}}$ and $e_{\emptyset}=1$. Every element in $\mathscr{C}(f,k)$ can be written uniquely in the form $s=\Sigma_{M\in\mathscr{P}_{n}}s_{M}e_{M}$ with $s_{M}\in k.$ \\

The \textit{even Clifford algebra} $\mathscr{C}^{+}(f,k)$ of $f$ is the $k$-subalgebra of $\mathscr{C}(f,k)$ generated by the elements $e_{M}$ with $|M|$ even.  We call by the \textit{Clifford group} $\mathscr{C}l(f,k)$ of $f$ (resp. the \textit{special Clifford group} $\mathscr{C}l^{+}(f,k)$) the multiplicative group of the invertible elements of $\mathscr{C}(f,k)$ (resp. $\mathscr{C}^{+}(f,k)$) such that $sEs^{-1}=E$.\\

The Clifford algebra $\mathscr{C}(f,k)$ has an anti-automorphism $^{*}$ which acts on the basis elements by $\left( e_{\mu_{1}}e_{\mu_{2}}\cdots e_{\mu_{k}}\right)^{*}= e_{\mu_{k}}e_{\mu_{k-1}}\cdots e_{\mu_{1}}$. For any $s\in\mathscr{C}l^{+}(f,k)$, $ss^{*}$ is an element of $k^{\times}$, which is called the \textit{spinor norm} of $s$ \cite[§9, Proposition 4]{Bourbaki59}. The \textit{spin group of $f$} is defined as the group of elements in the special Clifford group with the spinor norm equal to one:
\begin{equation}
\Spin_{f}(k)=\lbrace s\in \mathscr{C}^{+}(f,k), sEs^{*}=E \hspace{2mm}\mbox{and}\hspace{2mm} ss^{*}=1\rbrace.
\end{equation}

%

The conditions defining the group $\Spin_{f}$ are determined by polynomials in $2^{n}$ variables which give $\Spin_{f}$ the structure of an affine $k$-algebraic group. If $k=\mathbb{R}$ and $f$ has signature $(1,n)$ over $\mathbb{R}$ the group $\Spin_{f}(\mathbb{R})$ is isomorphic to $\Spin(1,n)$.\\

For an element $s\in \Spin_{f}(k)$, the map $\varphi_{s}:E\mapsto E$ given by $\varphi_{s}(x)=sxs^{-1}$ defines a homomorphism

\begin{align}
\varphi: \Spin_{f}(k)&\mapsto \mbox{SO}_{f}(k), \label{phi} \\
s & \mapsto\varphi_{s}. \nonumber
\end{align}



The kernel of $\varphi$ is the set $\lbrace1,-1\rbrace$. Moreover if $f$ is isotropic then  

\begin{equation*}
\SO_{f}(k)/\varphi(\Spin_{f}(k))\simeq k^{\times}/(k^{\times})^2    
\end{equation*}

\cite[II.2.3, II.2.6, II.3.3 and II.3.7]{Chevalley54}. If $f$ is isotropic and $k$ is a finite field this implies that  $\lvert\Spin_{f}(k)\rvert=\lvert
\SO_{f}(k)\rvert.$ If $k=\mathbb{R}$ and $f$ is isotropic, we have moreover that the image of $\varphi$ is equal to the group $\SO_{f}(\mathbb{R})^{\circ}$ \cite[Sec. 2.9]{Chevalley54}. In particular, any lattice $\Gamma$ in $\Spin(1,n)$ acts on the hyperbolic $n$-space $\mathbb{H}^{n}$ and $\Gamma\backslash\mathbb{H}^{n}$ has finite volume.\\



We finish the section with a definition. Analogously to the complex numbers and the quaternion algebra, for $s=\Sigma_{M\in\mathscr{P}_{n}}s_{M}i_{M}$ we define the \textit{$k$-part of s} as $s_{k}:=s_{\emptyset}$. In the case $k=\mathbb{R}$ we call it the \textit{real part of}~$s$.

\subsection{Arithmetic hyperbolic manifolds of the first type}\label{Arithmetic subgroups of Spin(1,n)}
There is a wide class of discrete subgroups of a semi-simple Lie group $G$ which can be constructed using arithmetic tools. We recall that a discrete subgroup $\Gamma\subset G$ is \textit{arithmetic} if there exist a number field $k$, a
$k$-algebraic group $\mbox{H}$, and an epimorphism 
$\varphi: \mbox{H}(k \otimes_{\mathbb{Q}}\mathbb{R}) \rightarrow G$ with
compact kernel such that $\varphi(\mbox{H}(\mathcal{O}_{k}))$ is commensurable to $\Gamma$, where $\mathcal{O}_{k}$ is the ring of integers of $k$ and $\mbox{H}(\mathcal{O}_{k})$ denotes the $\mathcal{O}_{k}$-points of $\mbox{H}$ with respect to some fixed embedding of H into $\GL_{m}$. We call $k$ the \textit{field of definition} of $\Gamma$. \\

By a fundamental theorem of Borel and Harish-Chandra any arithmetic subgroup of a semi-simple Lie group is a lattice. A lattice of a semi-simple Lie group which is an arithmetic subgroup is called an \textit{arithmetic lattice}. A hyperbolic manifold $M=\Gamma\backslash\mathbb{H}^{n}$ such that $\Gamma$ is an arithmetic subgroup of $\Isom^{+}(\mathbb{H}^{n})\simeq\SO(1,n)^{\circ}$ is called an \textit{arithmetic hyperbolic manifold}. Arithmeticity is preserved by finite quotients, therefore if $\Gamma$ is an arithmetic lattice in $\Spin(1,n)$ then $\Gamma$ projects to an arithmetic lattice in $\SO(1,n)^{\circ}$.\\ 

The fact that the groups $\Spin_{f}$ and $\SO_{f}$ are algebraic groups allows us to construct a certain class of arithmetic subgroups of $\widetilde{G}=\Spin(1,n)$ and  $G=\SO(1,n)^{\circ}$. Suppose $k$ is a totally real number field and $f$ is a quadratic form defined over $k$ and of signature $(1,n)$ over $\mathbb{R}$, such that for any non-trivial embedding $\sigma: k\rightarrow\mathbb{R}$ the quadratic form $f^{\sigma}$ is positive definite. From now on, any quadratic form satisfying these conditions will be called an \textit{admissible quadratic form}. By restriction of scalars $\Spin_{f}(\mathcal{O}_{k})$ and  $\SO_{f}(\mathcal{O}_{k})$ embed as arithmetic subgroups of 
$\Spin_{f}(\mathbb{R})\simeq \widetilde{G}$ and $\SO_{f}(\mathbb{R})$, respectively. Intersecting with $\SO_{f}(\mathbb{R})^{\circ}$ we obtain an arithmetic subgroup of $\SO_{f}(\mathbb{R})^{\circ}\simeq G.$ The subgroups $\Gamma$ of $\widetilde{G}$ and $G$ constructed in this way and subgroups commensurable with them are called \textit{arithmetic lattices of the first type}. If $\Gamma$ is torsion-free,  $M=\Gamma\backslash\mathbb{H}^{n}$ is called an \textit{arithmetic hyperbolic manifold of the first type}.\\

Any non-cocompact arithmetic lattice in $G$ and $\widetilde{G}$ is of the first type and defined over $\mathbb{Q}$ \cite[Secs. 1-2]{LiMill93}, and any arithmetic subgroup defined over $\mathbb{Q}$ is non-cocompact if $n\geq 4$  \cite[Sec. §6.4]{Mor15}. If $n$ is even, all the co-compact arithmetic lattices of $\widetilde{G}$ and $G$ are of the first type. In odd dimensions $n\neq 7$ there is a second class of co-compact arithmetic subgroups of $G$ and $\widetilde{G}$ arising from skew-hermitian forms over division quaternion algebras. In dimension 7 there is a third class constructed using certain Cayley algebras (see \cite[Sec. 1]{LiMill93} and the references therein).\\

Defining arithmetic subgroups of $\Spin(1,n)$ requires to consider isomorphisms $\Spin_{f}(\mathbb{R})\simeq\Spin(1,n)$. We will need such an isomorphism preserving the real part of the spin elements. For completeness, we will finish this section describing one of these isomorphisms.\\  

Suppose $f$ is an admissible quadratic form of dimension $n+1$. Let $(E,f)$ be a quadratic vector space of dimension $n+1$ over $\mathbb{R}$ and $(\mathbb{R}^{n+1},q)$ be the Euclidean space with the quadratic form $q=x_{0}^{2}-x_{1}^{2}-\cdots-x_{n}^{2}$. Choose an orthogonal basis $i_{1},\ldots, i_{n+1}$ of $(E,f)$ such that $f(x)=x_{0}^{2}-x_{1}^{2}-\cdots-x_{n}^{2}$ and let $e_{1},\ldots, e_{n+1}$ be the canonical basis of $\mathbb{R}^{n+1}$. By the universal property of $\mathscr{C}(f,\mathbb{R})$, the map $g:E\rightarrow \mathscr{C}(q,\mathbb{R})$ defined as $g(\Sigma x_{j}i_{j})=\Sigma x_{j}e_{j}$ extends uniquely to an $\mathbb{R}$-algebras homomorphism $\tilde{g}:\mathscr{C}(f,\mathbb{R})\rightarrow\mathscr{C}(q,\mathbb{R})$ given by 

\begin{equation*}
\tilde{g}\left(\sum_{M\in\mathscr{P}_{n+1}}
s_{M}i_{M}\right)=
\sum_{M\in\mathscr{P}_{n+1}}s_{M}e_{M}.
\end{equation*}
From this equation it is clear that $\tilde{g}$ is an isomorphism and $\tilde{g}(s)_{\mathbb{R}}=s_{\mathbb{R}}$. Note that $\tilde{g}$ commutes with $*$ and restricts to an isomorphism of groups
\begin{equation*}
\tilde{g}:\Spin_{f}(\mathbb{R}) \xrightarrow{\sim}\Spin(1,n).
\end{equation*}

\subsection{Congruence coverings of arithmetic hyperbolic manifolds}\label{congruencesubgroups}

Let $\Gamma$ be an arithmetic subgroup of a semi-simple Lie group $G$ commensurable with $\varphi(\mbox{H}(\mathcal{O}_{k}))$ as in Section \ref{Arithmetic subgroups of Spin(1,n)}. If $I\subset\mathcal{O}_{k}$ is a non-zero ideal of $\mathcal{O}_{k}$, the \textit{principal congruence subgroup of} $\Gamma$ associated to $I$ is by definition the subgroup $\Gamma(I)=\Gamma\cap \varphi(\mbox{H}(I))$, where 

$$\mbox{H}(I):=\ker\big( \mbox{H}(\mathcal{O}_{k})\xrightarrow{\pi_{I}} \mbox{H}(\mathcal{O}_{k}/{I})\big)$$
and $\pi_{I}$ denotes the reduction map modulo $I$. We must to remark that the definition of the subgroup $\Gamma(I)$ depends on the representation of the group $\mbox{H}$ as a linear group, but its commensurability class does not depends on this choice.  \\ 

If $\Gamma$ is an arithmetic subgroup of $\Spin(1,n)$ and $M=\Gamma\backslash\mathbb{H}^{n}$, any ideal $I\subset\mathcal{O}_{k}$ defines a \textit{principal congruence covering} $M_{I}=\Gamma(I)\backslash\mathbb{H}^{n}\rightarrow M$. Since $\Gamma(I)$ is a normal finite-index subgroup of $\Gamma$, the covering $M_{I}\rightarrow M$ is a regular finite sheeted covering map.\\

We would like to give now a representation of the principal congruence subgroups of $\Gamma=\Spin_{f}(\mathcal{O}_{k})$ with $f$ an admissible quadratic form. We will use the multiplicative structure of the algebra $\mathscr{C}(f,\mathbb{R})$ to describe an embedding of $\Spin_{f}(\mathbb{R})$ into $\GL_{m}$ for some $m$. Choose an orthogonal basis $B=\lbrace e_{1},\ldots,e_{n+1}\rbrace$ with respect to $f$. The Clifford algebra $\mathscr{C}(f,\mathbb{R})$ is a real vector space of dimension $2^{n+1}$ with basis $\lbrace e_{M} \rbrace_{M\in\mathscr{P}_{n+1}}$ and the group $\Spin_{f}(\mathbb{R})$ acts on it by left multiplication. For any $s\in \Spin_{f}(\mathbb{R})$, the linear map $L_{s}(x)=sx$, $x\in \mathscr{C}(f,\mathbb{R})$, belongs to $\GL(\mathscr{C}(f,\mathbb{R}))\simeq \GL_{2^{n+1}}(\mathbb{R})$ and so we have a linear representation $L:\Spin_{f}(\mathbb{R})\rightarrow \GL_{2^{n+1}}(\mathbb{R})$. 
If $s=\sum_{\lvert M\rvert \hspace{1mm}\mbox{even}}s_{M} e_{M}$ with $s_{M}\in \mathbb{R}$, then $L_{s}\in \GL_{2^{n+1}}(\mathcal{O}_ {k})$ if and only if all $s_{M}\in\mathcal{O}_{k}$. We then obtain that

$$\Gamma=\Bigg\{ s=\sum_{\lvert M\rvert\hspace{1mm}\mbox{even}}s_{M} e_{M}\mid s_{M}\in \mathcal{O}_{k}  \hspace{2mm}\mbox{and}\hspace{2mm} ss^{*}=1 \Bigg\}.$$

With this representation, for an ideal $I\subset\mathcal{O}_{k}$ the principal congruence subgroup $\Gamma(I)$ corresponds to the kernel of the projection map $\Spin_{f}(\mathcal{O}_{k})\xrightarrow{\pi_{I}} \GL_{2^{n+1}}(\mathcal{O}_{k}/I)$, which corresponds to the group

$$\Gamma(I)= \Bigg\{s=\sum_{\lvert M\rvert\hspace{1mm}\mbox{even}}s_{M} e_{M}\in\Gamma \mid s_{M}\in I \hspace{2mm}\mbox{for}\hspace{2mm} M\neq \emptyset\hspace{2mm} \mbox{and} \hspace{1mm} s_{\mathbb{R}}-1\in I\Bigg\}.$$

\section{The displacement of elements in $\Spin(1,n)$ acting on $\mathbb{H}^{n}$}\label{displacement}
The main goal in this article is to obtain a lower bound for the systole of the hyperbolic manifolds $M_{I}=\Gamma(I)\backslash \mathbb{H}^{n}$. The geometry of $M_{I}$ is defined by the geometry of $\mathbb{H}^{n}$, the group $\Gamma(I)$ and its action on  $\mathbb{H}^{n}$. The lengths of closed geodesics on $M_{I}$ are then encapsulated by the hyperbolic distance between points $p\in\mathbb{H}^{n}$ and its displacement, or image, by the action of elements in $\Gamma(I)$.\\

In this section we start to explore this action seeing that the real part of elements in $\Spin(1,n)$ plays a remarkable role. In the next section we will specialize to the congruence subgroups $\Gamma(I)$.\\

Since we will focus our attention on the group $\Spin(1,n)$, we can fix the real vector space $E=\mathbb{R}^{n+1}$, the quadratic form $q=x_{1}^{2}-x_{2}^{2}-\cdots-x_ {n+1}^{2}$ and the canonical basis $e_{1}, e_ {2},\ldots, e_{n+1}$ of $\mathbb{R}^{n+1}$. The Clifford algebra $\mathscr{C}(q)$ can be then described as the $\mathbb{R}$-algebra generated by $e_{1}, e_ {2},\ldots, e_{n+1}$ with the relations $e_{1}^{2}=1$, $e_ {j}^{2}=-1$ for $j=2,\ldots, n+1$ and $e_{i}e_{j}=-e_{j}e_{i}$ for $i\neq j$.
The following lemma often will allow us to simplify the situation by conjugating in the group $\Spin(1,n)$.

\begin{lemma}\label{realpartinvariant}
The real part of the elements of $\Spin(1,n)$ is a conjugation invariant.
\end{lemma}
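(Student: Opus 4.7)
The plan is to exhibit the real part as a trace, since traces are automatically conjugation invariant. Consider the left regular representation $L\colon \Spin(1,n)\to \GL(\mathscr{C}(q))\simeq \GL_{2^{n+1}}(\mathbb{R})$, $L_{s}(x)=sx$, expressed in the basis $\{e_{M}\}_{M\in\mathscr{P}_{n+1}}$. Since $L$ is a homomorphism, $L_{tst^{-1}}=L_{t}L_{s}L_{t}^{-1}$, so $\tr(L_{s})$ depends only on the conjugacy class of $s$ in $\Spin(1,n)$.

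The key computation is to evaluate $\tr(L_{s})$ in the chosen basis. For $s=\sum_{M}s_{M}e_{M}$ and a basis element $e_{N}$, one has $L_{s}(e_{N})=\sum_{M}s_{M}e_{M}e_{N}$, and in the Clifford algebra every product $e_{M}e_{N}$ equals $\pm e_{M\triangle N}$ for the symmetric difference. The diagonal entry indexed by $N$ is therefore the coefficient of $e_{N}$ in $L_{s}(e_{N})$, which can only come from those $M$ with $M\triangle N=N$, i.e.\ $M=\emptyset$. Hence every diagonal entry equals $s_{\emptyset}=s_{\mathbb{R}}$, and
\begin{equation*}
\tr(L_{s})=2^{n+1}\,s_{\mathbb{R}}.
\end{equation*}

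Combining the two paragraphs, for any $s,t\in\Spin(1,n)$,
\begin{equation*}
(tst^{-1})_{\mathbb{R}}=\tfrac{1}{2^{n+1}}\tr(L_{tst^{-1}})=\tfrac{1}{2^{n+1}}\tr(L_{s})=s_{\mathbb{R}},
\end{equation*}
which is the claim. The only nontrivial step is the combinatorial observation that the Clifford product $e_{M}e_{N}$ lies in the line spanned by $e_{M\triangle N}$, so the only basis element $e_{M}$ whose left multiplication preserves every $e_{N}$ up to scalar is $e_{\emptyset}=1$; this is immediate from the anticommutation relations $e_{i}e_{j}=-e_{j}e_{i}$ ($i\neq j$) and $e_{i}^{2}=\pm 1$, once one reorders factors to bring indices in $M\cap N$ adjacent and use $e_{i}^{2}=\pm 1$. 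I expect this to be the main (and only) calculational point; the rest is just the conjugation invariance of matrix trace applied to the regular representation.
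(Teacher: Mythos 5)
Your proof is correct and takes essentially the same route as the paper: both realize $s_{\mathbb{R}}$ as $\tfrac{1}{2^{n+1}}\tr(L_{s})$ for the left regular representation on $\mathscr{C}(q)$ and then invoke conjugation invariance of the trace. You have merely filled in the combinatorial reason (via $e_{M}e_{N}=\pm e_{M\triangle N}$) why each diagonal entry equals $s_{\mathbb{R}}$, a step the paper states without elaboration.
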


\begin{proof}

Consider the faithful representation 
$L:\Spin(1,n)\rightarrow \GL_{2^{n}}(\mathbb{R})$ given by the action of $\Spin(1,n)$ on $\mathscr{C}(q)$ by the left multiplication $L_{s}(x)=sx$. For the basis element $e_{M}$, the coefficient in $e_{M}$ of $L_{s}(e_{M})$ is equal to $s_{\mathbb{R}}$, hence the associated matrix of $L_{s}$ in this basis has all its entries in the principal diagonal equal to $s_{\mathbb{R}}$. This implies that the trace of $L_{s}$ is equal to $2^{n+1} s_{\mathbb{R}}$, and since the trace of matrices is conjugation invariant, it concludes the proof.\qedhere
\end{proof}

Now, considering the map $\varphi$ in \eqref{phi}, we can relate the displacement and the real part of elements in $\Spin(1,n)$. The main step is the following lemma.

\begin{lemma}\label{coshofdistance}
Let $s\in \Spin(1,n)$ and $\varphi_{s}$ its image under $\varphi$ in $\SO^{o}(1,n)$. If $A=(a_{i,j})_{i,j=1,\ldots,n+1}$ represents $\varphi_{s}$ in the basis $\lbrace e_{1},\ldots,e_{n+1}\rbrace$, we have $$\cosh(d(e_{1},\varphi_{s}(e_{1}))=a_{1,1}.$$

In particular, if $s=\sum_{M}s_{M}e_{M}$, then $\cosh(d(e_{1},\varphi_{s}(e_{1}))=\sum_{M} s_{M}^{2}$. 

\end{lemma}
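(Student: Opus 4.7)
I would prove the two assertions in sequence.

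For the first identity, the natural tool is the distance formula in the hyperboloid model: for any two points $u,v$ on the upper sheet of $\{x:q(x)=1\}$, one has $\cosh(d(u,v))=\Phi(u,v)$, where $\Phi$ is the symmetric bilinear form associated with the quadratic form $q$. Applying this with $u=e_{1}\in\mathbb{H}^{n}$ and $v=\varphi_{s}(e_{1})=Ae_{1}$, the functional $\Phi(e_{1},\cdot)$ simply extracts the first coordinate, because $\Phi(e_{1},e_{1})=1$ and $\Phi(e_{1},e_{j})=0$ for $j\neq 1$. Thus $\cosh(d(e_{1},\varphi_{s}(e_{1})))=(Ae_{1})_{1}=a_{1,1}$.

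For the second identity, I would re-express $a_{1,1}$ as a scalar inside the Clifford algebra. Since $ss^{*}=1$, we have $\varphi_{s}(e_{1})=se_{1}s^{*}\in E$. Writing it as $\sum_{j}y_{j}e_{j}$ and using $e_{1}^{2}=1$ together with $e_{1}e_{j}=-e_{j}e_{1}$ for $j\neq 1$, one obtains $e_{1}\cdot\varphi_{s}(e_{1})=y_{1}+\sum_{j\neq 1}y_{j}e_{1}e_{j}$, whose real part is $y_{1}=a_{1,1}$. Hence
\[
a_{1,1}=\bigl(e_{1}se_{1}s^{*}\bigr)_{\mathbb{R}}.
\]

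The remaining step is a direct expansion of this real part starting from $s=\sum_{|M|\,\text{even}}s_{M}e_{M}$. First I would verify by anticommutation that $e_{1}e_{M}e_{1}=\epsilon(M)\,e_{M}$, where $\epsilon(M)=+1$ if $1\notin M$ and $\epsilon(M)=-1$ if $1\in M$; both cases rely on the parity of $|M|$. Then
\[
e_{1}se_{1}s^{*}=\sum_{M,N}\epsilon(M)\,s_{M}s_{N}\,e_{M}e_{N}^{*},
\]
and since $e_{M}e_{N}^{*}$ is a scalar multiple of $e_{M\triangle N}$, only the diagonal terms $M=N$ contribute to the real part. A direct telescoping shows $e_{M}e_{M}^{*}=\prod_{\mu\in M}e_{\mu}^{2}$, which equals $+1$ when $1\notin M$ (an even number of factors $-1$) and $-1$ when $1\in M$ (one factor $+1$ and an odd number of factors $-1$). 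The product $\epsilon(M)\cdot e_{M}e_{M}^{*}$ is therefore $+1$ in every case, yielding $a_{1,1}=\sum_{M}s_{M}^{2}$.

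The argument is elementary; the only delicate point is the bookkeeping of two sign flips (one from conjugation by $e_{1}$, one from $e_{M}e_{M}^{*}$), which always cancel thanks to the parity of $|M|$. I expect the conceptual crux to be the identification $a_{1,1}=(e_{1}se_{1}s^{*})_{\mathbb{R}}$ in the second paragraph, which transports a matrix-entry computation into a Clifford-algebra statement; once this is in place, the remaining expansion is routine.
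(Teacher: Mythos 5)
Your proof is correct, and the first half is genuinely cleaner than the paper's. For the identity $\cosh(d(e_1,\varphi_s(e_1)))=a_{1,1}$, you invoke the hyperboloid distance formula $\cosh(d(u,v))=\Phi(u,v)$ directly; the paper instead conjugates by a stabilizer element $B$ to rotate $\varphi_s(e_1)$ into the plane spanned by $e_1,e_{n+1}$ and then reads off the distance from the explicit geodesic $\alpha(t)=\cosh(t)e_1+\sinh(t)e_{n+1}$. That stabilizer argument is essentially a re-derivation of the distance formula, so your route saves a step and is the more standard one. Note, however, that one must actually know $a_{1,1}=\Phi(e_1,Ae_1)\geq 1$ (i.e.\ that $Ae_1$ lies on the same sheet of the hyperboloid as $e_1$) for the formula to apply; the paper's version sidesteps this because $\SO(1,n)^{\circ}$ preserves the upper sheet by definition. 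It is worth making that one-line remark explicit.

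For the second identity you also arrive at the same computation as the paper, but via the intermediate step $a_{1,1}=(e_1 s e_1 s^{*})_{\mathbb{R}}$: you left-multiply by $e_1$ to convert "the $e_1$-coefficient of $se_1s^{*}$" into "the real part of $e_1 s e_1 s^{*}$", and then do the sign bookkeeping with $\epsilon(M)$ and $e_M e_M^{*}$. The paper extracts the $e_1$-coefficient of $se_1s^{*}=\sum_{M,N}s_M s_N\,e_N e_1 e_M^{*}$ directly, observing that off-diagonal terms $M\neq N$ give basis elements other than $e_1$, and that the diagonal terms give $e_1$ with coefficient $+1$ in both the $1\in M$ and $1\notin M$ cases. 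The two arguments are logically equivalent — the same two sign cancellations appear — so this part is essentially a restatement of the paper's computation with a slightly different organization. Both are correct.
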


\begin{proof}
The stabilizer
of the point $e_{1}=(1,0,\ldots, 0)\in\mathbb{H}^{n}$ for the action of $\SO(1,n)^{\circ}$ on $\mathbb{H}^{n}$ is given by the subgroup of matrices of the form

\begin{equation}\label{stabilizer1}
B=\begin{pmatrix}
  1 & 0 & 0 & \cdots & 0 \\
  0 & b_{2,1} & b_{2,2} & \cdots & b_{2,n+1}\\
  \vdots & \vdots  & \vdots  & \ddots & \vdots\\
  0 & b_{n,1} & b_{n,2} & \cdots & b_{n,n+1}\\
  \end{pmatrix},
\end{equation}

where the matrix $\tilde{B}=(b_{i,j})$ is an $n\times n$ orthogonal matrix.\\

Now, $\varphi_{s}(e_{1})=A(e_{1})=(a_{1,1},\ldots,a_{n+1,1})$ and we can find an orthogonal matrix $\tilde{B}$ such that  \\
\begin{center}
$\tilde{B}
 \begin{pmatrix}
  a_{2,1} \\
  a_{3,1} \\
  \vdots \\
  a_{n+1,1}\\
  
 \end{pmatrix}=
 \begin{pmatrix}
  0 \\
  0 \\
  \vdots \\
  c\\
  
 \end{pmatrix},$
where $c^{2}=a_{2,1}^{2}+\cdots+a_{n+1,1}^{2}.$ 
\end{center}

Then, choosing $B$ as in \eqref{stabilizer1} we have

\begin{align*}
\cosh(d(e_{1},\varphi_{s}(e_{1})) &=\cosh(d(Be_{1},B\varphi_{s}(e_{1}))\\ &
=\cosh(d(e_{1},a_{1,1}e_{1}+ce_{n+1}))\\ &
=a_{1,1}.
\end{align*}

The last equality holds due to the fact that the curve $\alpha(t)=\cosh(t)e_{1}+\sinh(t)e_{n+1}$ is a geodesic in $\mathbb{H}^{n}$ and then $d(e_{1},\alpha(t))=d(\alpha(0),\alpha(t))=t$ for any $t\in\mathbb{R}.$ So take $t$ such that $\cosh(t)=a_{1,1}$ and $\sinh(t)=c$. \\

Now, if $s=\sum_{M}s_{M}e_{M}$ then $se_{1}s^{*}=\sum_{M,N}s_{M}s_{N}e_{N} e_{1}e^{*}_{M}$. Note that if $N\neq M$ the product $e_{N}e_{1}e^{*}_{M}$ is a basis element different of $e_{1}$; if $N=M$ and $e_{M}$ contains $e_{1}$ in its product then $e_{N}e_{1}e^{*}_{M}=-e_{1}e_{M}e^{*}_{M}=e_{1}$; and if $N=M$ and $e_{M}$ does not contains $e_{1}$ in its product then $e_{N}e_{1}e^{*}_{M}=e_{1}e_{N}e^{*}_{N}=e_{1}$. With all this we deduce that the coordinate $e_{1}$ of $se_{1}s^{*}$, which is equal to $a_{1,1}$, agrees with $\sum_{M}s_{M}^{2}$.\qedhere

\end{proof}

We can conclude this section relating the displacement of an element $s\in\Spin(1,n)$ with its real part. This relation is a generalization of the well-known fact that in dimensions 2 or 3 the displacement of an isometry is related to the trace of the matrix in $\SL(2,\mathbb{R})$ or $\SL(2,\mathbb{C})$, see e.g. \cite[Chap. 7]{Beardon}.

\begin{proposition}\label{real part-displacement}
For any $s\in \Spin(1,n)$ with $\lvert s_{\mathbb{R}}\rvert\geq 1$ we have $$d(x,\varphi_{s}(x))\geq 2 \log(|s_{\mathbb{R}}|).$$
\end{proposition}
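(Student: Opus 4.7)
The plan is to reduce the statement to the base point $e_{1}$ via conjugation, then invoke Lemma \ref{coshofdistance}, and finally deduce the logarithmic bound from a one-line estimate on $\arccosh$.

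First, I would use Lemma \ref{realpartinvariant} to eliminate the dependence on $x$. Given $x\in\mathbb{H}^{n}$, since $\SO(1,n)^{\circ}$ acts transitively on $\mathbb{H}^{n}$ and $\varphi:\Spin(1,n)\to\SO(1,n)^{\circ}$ is surjective, pick $t\in\Spin(1,n)$ with $\varphi_{t}(e_{1})=x$. Then
\begin{equation*}
d(x,\varphi_{s}(x))=d(\varphi_{t}(e_{1}),\varphi_{s}\varphi_{t}(e_{1}))=d(e_{1},\varphi_{t^{-1}st}(e_{1})),
\end{equation*}
and by Lemma \ref{realpartinvariant} we have $(t^{-1}st)_{\mathbb{R}}=s_{\mathbb{R}}$. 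Therefore it suffices to prove the inequality at the distinguished point $e_{1}$, with $s$ possibly replaced by any conjugate.

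Next, I would apply Lemma \ref{coshofdistance}. Writing $s=\sum_{M\in\mathscr{P}_{n+1}}s_{M}e_{M}$, the lemma gives
\begin{equation*}
\cosh\bigl(d(e_{1},\varphi_{s}(e_{1}))\bigr)=\sum_{M}s_{M}^{2}\geq s_{\emptyset}^{2}=s_{\mathbb{R}}^{2},
\end{equation*}
since every term in the sum is nonnegative and the empty-index term is exactly $s_{\mathbb{R}}^{2}$.

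Finally, I would derive the claimed bound from $\cosh(d)\geq s_{\mathbb{R}}^{2}$. Under the hypothesis $|s_{\mathbb{R}}|\geq 1$ we have $s_{\mathbb{R}}^{2}\geq 1$, so $\arccosh$ may be applied and
\begin{equation*}
d(e_{1},\varphi_{s}(e_{1}))\geq \arccosh(s_{\mathbb{R}}^{2})=\log\!\Bigl(s_{\mathbb{R}}^{2}+\sqrt{s_{\mathbb{R}}^{4}-1}\Bigr)\geq \log(s_{\mathbb{R}}^{2})=2\log|s_{\mathbb{R}}|.
\end{equation*}
Combined with the conjugation reduction, this yields the inequality for arbitrary $x$. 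I do not expect any real obstacle: the conjugation-invariance lemma handles the point-dependence, Lemma \ref{coshofdistance} packages all the nontrivial Clifford-algebraic content, and the final estimate is just the elementary inequality $\arccosh(y)\geq \log(y)$ for $y\geq 1$.
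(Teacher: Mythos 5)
Your proof is correct and follows essentially the same route as the paper: reduce to the base point $e_{1}$ via conjugation using Lemma \ref{realpartinvariant}, apply Lemma \ref{coshofdistance} to get $\cosh(d)\geq s_{\mathbb{R}}^{2}$, and conclude via $\arccosh(y)\geq\log y$. Your write-up simply makes explicit the intermediate steps (the choice of conjugating element $t$ and the closed-form expression for $\arccosh$) that the paper leaves to the reader.
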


\begin{proof}
Since the real part is conjugation invariant, conjugating by an element in $\Spin(1,n)$ we can suppose that $x=e_{1}$. In this case, if $s=\sum_{M}s_{M}e_{M}$ by Lemma \ref{coshofdistance} we have

\begin{align*}
\cosh(d(e_{1},\varphi_{s}(e_{1}))=\sum_{M} s_{M}^{2}\geq s_{\mathbb{R}}^{2}.
\end{align*}
Hence $d(e_{1},\varphi_{s}(e_{1}))\geq \arccosh(s_{\mathbb{R}}^{2}) \geq 2\log(\lvert s_{\mathbb{R}}\rvert).$\qedhere
\end{proof}

\section{Lower bound for the displacement of $\Gamma(I)$}\label{lowerbound}
In this section we will study the displacement of the principal congruence subgroups $\Gamma(I)$ of $\Gamma=\Spin_{f}(\mathcal{O}_{k})$ in terms on the ideal $I$. The main ideas are inspired by \cite{KSV07} where the authors study the systole of compact hyperbolic surfaces and $3$-manifolds.\\

Suppose $f$ is an admissible quadratic form defined over a totally real number field of degree $d$ over $\mathbb{Q}$. We choose an orthogonal basis $\lbrace e_{1},\dots,e_{n+1}\rbrace$ such that all the coefficients of $f$ lie in the ring $\mathcal{O}_{k}$ and recall from Section \ref{congruencesubgroups}  that we can represent $\Gamma(I)$ in the form 

$$\Gamma(I)= \lbrace s=\sum_{\lvert M\rvert\hspace{1mm}\mbox{even}}s_{M} e_{M}\in\Gamma\mid s_{M}\in I \hspace{2mm}\mbox{for any}\hspace{2mm} M\neq \emptyset, s_{\mathbb{R}}-1\in I\rbrace.$$

By Proposition \ref{real part-displacement} one can find a lower bound for the displacement of elements in $\Gamma(I)$ from a lower bound for the real part of its elements. The condition $ss^{*}=1$ implies that

\begin{equation}\label{mainequality}
\sum_{ \lvert M\rvert \hspace{1mm}\mbox{even}}s_{M}^{2}f(e_{M})=1,
\end{equation}
where for $M=\lbrace i_{v_{1}},\ldots,i_{v_{k}}\rbrace$ we denote by $f(e_{M})$ the product\\ $f(e_{i_{v_{1}}})\cdots f(e_{i_{v_{k}}})$, which lies in $\mathcal{O}_{k}$.\\

If $s\in\Gamma(I)$, we obtain that $s_{\mathbb{R}}^{2}-1\in I^{2}$. Writing $s_{\mathbb{R}}=y_{0}+1$ with $y_{0}\in I$, we have $2y_{0}\in I^{2}$. From this we can obtain a lower bound for $\lvert s_{\mathbb{R}}\rvert$ in terms of the norm of the ideal $I$, $\N(I)$, which is by definition the cardinality of the quotient ring $\mathcal{O}_{k}/ I$.

\begin{lemma}\label{lowerboundrealpart}
For any non-trivial $s\in\Gamma(I)$, we have $\lvert s_{\mathbb{R}}\rvert \geq \frac{\N(I)^{2}}{2^{2d-1}}-1.$
\end{lemma}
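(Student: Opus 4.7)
The strategy is to convert the ideal-theoretic relation $2y_0\in I^2$ (already derived in the paragraph preceding the lemma) into the archimedean inequality $|y_0|\geq \N(I)^2/2^{2d-1}$, via the standard lower bound $|N_{k/\mathbb{Q}}(x)|\geq \N(J)$ valid for any $0\neq x\in J\subset\mathcal{O}_k$, applied to $x=2y_0$ and $J=I^2$. Since the full norm decomposes as $|N_{k/\mathbb{Q}}(2y_0)|=\prod_\sigma|2y_0^\sigma|$ over the real embeddings of $k$, to isolate a bound on the single factor $|y_0|$ corresponding to the identity embedding I would need upper bounds on the $d-1$ remaining Galois conjugates $|y_0^\sigma|$. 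This is precisely where admissibility of $f$ enters.

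To extract those upper bounds, for each non-identity embedding $\sigma\colon k\hookrightarrow\mathbb{R}$ admissibility guarantees that $f^\sigma$ is positive definite, so $f^\sigma(e_M)=\prod_{i\in M}f^\sigma(e_i)>0$ for every $M$. Applying $\sigma$ to the spinor-norm identity \eqref{mainequality} then yields $\sum_M(s_M^\sigma)^2 f^\sigma(e_M)=1$ with every summand non-negative; isolating the $M=\emptyset$ term gives $(s_{\mathbb{R}}^\sigma)^2\leq 1$, and hence $|y_0^\sigma|=|s_{\mathbb{R}}^\sigma-1|\leq 2$.

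Putting the pieces together, and noting that $y_0\neq 0$ whenever $s$ is non-trivial (the same Galois argument applied with $y_0=0$ forces $s_M^\sigma=0$ for every $M\neq\emptyset$ and every non-identity $\sigma$, so $s=\pm 1$ and the finitely many exceptional cases can be treated directly), I would conclude
\[
\N(I)^2\leq|N_{k/\mathbb{Q}}(2y_0)|=2^d|y_0|\prod_{\sigma\neq\mathrm{id}}|y_0^\sigma|\leq 2^d\cdot 2^{d-1}|y_0|=2^{2d-1}|y_0|,
\]
and finish via the reverse triangle inequality $|s_{\mathbb{R}}|\geq|y_0|-1$. The main obstacle is conceptual rather than technical: recognising that admissibility of $f$ is the precise hypothesis allowing one to control the non-identity archimedean factors and therefore to extract $|y_0|$ from the norm product.
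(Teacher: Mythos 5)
Your proof is correct and follows essentially the same route as the paper: apply each non-trivial Galois embedding to the spinor-norm identity \eqref{mainequality}, use admissibility of $f$ to get $|\sigma(y_0)|\leq 2$ and to rule out $y_0=0$, then deduce $|y_0|\geq\N(I)^2/2^{2d-1}$ from $\N(2y_0)\geq\N(I)^2$ and finish by the triangle inequality. (One minor slip: with $y_0=0$, so $s_{\mathbb{R}}=1$, the Galois argument forces $s=1$, not merely $s=\pm1$, but this does not affect the conclusion.)
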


\begin{proof}
Applying any non-trivial embedding $\sigma$ to Equation \eqref{mainequality}, since $f^{\sigma}$ is positive-definite, we have

\begin{equation}\label{applyingsigma}
\sigma(s_{\mathbb{R}})^{2}\leq \sum_{ M \hspace{1mm}\mbox{even}}\sigma(s_{M})^{2}f^{\sigma}(e_{M})=1.
\end{equation}

Replacing $\sigma(s_{\mathbb{R}})=\sigma(y_{0})+1$ in Equation \eqref{applyingsigma}, we obtain that $\lvert \sigma(y_{0})\rvert\leq 2$. Observe that $\sigma(y_{0})\neq 0$, since otherwise $\sigma(s_{\mathbb{R}})=1$ and Equation \eqref{applyingsigma} will imply $\sigma(s_{M})=0$ for any $M\neq\emptyset$, so by injectivity of $\sigma$ we will then have $s=1$.\\

Since $2y_{0}\in I^{2}$, we have $ \N(y_{0})\geq\frac{\N(I)^{2}}{2^{d}}$. By definition, $\N(y_{0})$ is equal to the product $\lvert y_{0}\rvert\prod_{\sigma\neq 1}\lvert\sigma(y_{0})\rvert$ and so $\lvert y_{0}\rvert\geq \frac{\N(I)^{2}}{2^{2d-1}}$. This shows that $$\lvert s_{\mathbb{R}}\rvert\geq \lvert y_{0}\rvert-1\geq \frac{\N(I)^{2}}{2^{2d-1}}-1.$$ 
\end{proof}

This result together with the results obtained in the previous section allow us to relate the displacement of elements of $\Gamma(I)$ with the norm $\N(I)$.

\begin{proposition}\label{mainproposition}
If $I\subset\mathcal{O}_{k}$ is an ideal with norm $\N(I)\geq 2^{d}$, then for any non-trivial $s\in\Gamma(I)$ and $x\in\mathbb{H}^{n}$ we have 

$$d(x,\varphi_{s}(x))\geq 4\log(\N(I))-4d\log(2).$$
\end{proposition}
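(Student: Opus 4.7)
The plan is to combine the two main inputs already established: Proposition \ref{real part-displacement}, which bounds the displacement from below in terms of $|s_{\mathbb{R}}|$, and Lemma \ref{lowerboundrealpart}, which bounds $|s_{\mathbb{R}}|$ from below in terms of $\N(I)$. The hypothesis $\N(I) \geq 2^{d}$ will turn out to be precisely the threshold that makes the two estimates mesh cleanly.

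First I would check that Proposition \ref{real part-displacement} is applicable, i.e.\ that $|s_{\mathbb{R}}|\geq 1$. From Lemma \ref{lowerboundrealpart} we have
\[
|s_{\mathbb{R}}|\;\geq\;\frac{\N(I)^{2}}{2^{2d-1}}-1,
\]
and the hypothesis $\N(I)\geq 2^{d}$ gives $\frac{\N(I)^{2}}{2^{2d-1}}\geq 2$, so indeed $|s_{\mathbb{R}}|\geq 1$. Then Proposition \ref{real part-displacement} immediately yields
\[
d(x,\varphi_{s}(x))\;\geq\;2\log|s_{\mathbb{R}}|\;\geq\;2\log\!\left(\frac{\N(I)^{2}}{2^{2d-1}}-1\right).
\]

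Next I would absorb the $-1$ into the exponent. It suffices to show
\[
\frac{\N(I)^{2}}{2^{2d-1}}-1\;\geq\;\frac{\N(I)^{2}}{2^{2d}},
\]
which rearranges to $\frac{\N(I)^{2}}{2^{2d}}\geq 1$, i.e.\ again $\N(I)\geq 2^{d}$. This is exactly the standing hypothesis, so the estimate holds. Plugging in,
\[
d(x,\varphi_{s}(x))\;\geq\;2\log\!\left(\frac{\N(I)^{2}}{2^{2d}}\right)\;=\;4\log(\N(I))-4d\log(2),
\]
as claimed.

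There is no real obstacle here; the proof is a two-line chain after the work of the preceding lemma and proposition. The only thing to watch is the bookkeeping of the constants $2^{2d-1}$ and $2^{2d}$, and verifying that the threshold $\N(I)\geq 2^{d}$ both guarantees $|s_{\mathbb{R}}|\geq 1$ (so that the logarithmic displacement bound can be invoked) and simultaneously allows the harmless simplification that replaces $\frac{\N(I)^{2}}{2^{2d-1}}-1$ by $\frac{\N(I)^{2}}{2^{2d}}$ inside the logarithm.
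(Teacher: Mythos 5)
Your proof is correct and follows exactly the paper's argument: invoke Lemma \ref{lowerboundrealpart} to bound $|s_{\mathbb{R}}|$, verify that the hypothesis $\N(I)\geq 2^{d}$ simultaneously ensures $|s_{\mathbb{R}}|\geq 1$ and allows the replacement $\frac{\N(I)^{2}}{2^{2d-1}}-1\geq\frac{\N(I)^{2}}{2^{2d}}$, and then apply Proposition \ref{real part-displacement}. The bookkeeping with the powers of $2$ matches the paper's as well, so this is the same proof.
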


\begin{proof}
The condition $\N(I)\geq 2^{d}$ implies that $\frac{\N(I)^{2}}{2^{2d-1}}-1\geq\frac{\N(I)^{2}}{2^{2d}}$ and so $\lvert s_{\mathbb{R}}\rvert\geq \frac{\N(I)^{2}}{2^{2d}}\geq 1$. The result follows from Proposition \ref{real part-displacement} and Lemma \ref{lowerboundrealpart}.
\end{proof}

This proposition shows in particular that the congruence subgroup $\Gamma(I)$ acts without fix points if the ideal $I$ has norm large enough, and so the quotient $M_{I}=\Gamma(I)\backslash\mathbb{H}^{n}$ is a hyperbolic manifold. 

\section{The index $[\Gamma:\Gamma(I)]$}\label{index}

In order to relate the systole of $\Gamma(I)\backslash\mathbb{H}^{n}$ with the index $[\Gamma:\Gamma(I)]$ we need to relate that index with the norm of the ideal $I$. To do that, we map $\Gamma$ to a suitable finite group depending on the Clifford algebra of $f$ and the ideal $I$. It will allow us to bound from above $[\Gamma:\Gamma(I)]$ by the cardinality of that finite group.\\

In this section we will assume that in the basis $\{e_{1},\ldots,e_{n+1}\}$ the admissible quadratic form $f$ is written in the form $f=a_{1}x_{1}^{2}-a_{2}x_{2}^{2}-\cdots-a_{n+1}x_{n+1}^{2}$ with $a_{i}\in\mathcal{O}_{k}$. Denote by $\mathcal{Q}$ the $\mathcal{O}_{k}$-algebra in $\mathscr{C}(f,\mathbb{R})$ given by $$\mathcal{Q}=\Bigg\{s=\sum_{\lvert M\rvert\hspace{1mm}\mbox{even}}s_{M} e_{M}\mid s_{M}\in \mathcal{O}_{k}\Bigg\}.$$

It is clear that the map $*$ preserves $\mathcal{Q}$, and any ideal $I\subset\mathcal{O}_{k}$ defines an ideal in $\mathcal{Q}$ given by

\begin{equation}\label{definition}
I\mathcal{Q}=\Bigg\{\sum_{\lvert M\rvert\hspace{1mm}\mbox{even}}s_{M} e_{M}\in \mathcal{Q}\mid s_{M}\in I\Bigg\} 
\end{equation}

which is also preserved by $*$. Therefore, we obtain a group automorphism in the quotient $\mathcal{Q}/I\mathcal{Q}$ given by 

\begin{align}
*: \mathcal{Q}/I\mathcal{Q}&\rightarrow \mathcal{Q}/I\mathcal{Q}, \\
s+I\mathcal{Q} & \mapsto s^{*}+I\mathcal{Q}. \nonumber
\end{align}

We consider now the finite group
\begin{equation}\label{definition2}
\left(\mathcal{Q}/I\mathcal{Q}\right)^{1}=\Bigg\{\bar{s}\in \mathcal{Q}/I\mathcal{Q}\mid \bar{s}\bar{s}^{*}=1 \hspace{1mm} \mbox{and}\hspace{1mm} \bar{s}\bar{E}\bar{s}^{*}=\bar{E} \Bigg\},
\end{equation}

where $\bar{s}$ denotes the image of $s$ via the projection map $\pi:\mathcal{Q}\rightarrow \mathcal{Q}/I\mathcal{Q}$ and $\bar{E}=\Bigg\{\sum_{i=1}^{n+1}\bar{x}_{i}\bar{e}_{i}\in \mathcal{Q}/I\mathcal{Q}\mid x_{i}\in \mathcal{O}_{k}\Bigg\}.$ 

\begin{proposition}
For any ideal $I\subset\mathcal{O}_{k}$ we have that $$[\Gamma:\Gamma(I)]\leq \lvert \left(\mathcal{Q}/I\mathcal{Q}\right)^{1}\rvert.$$
\end{proposition}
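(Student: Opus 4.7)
The plan is to set up the obvious reduction map $\pi_I \colon \mathcal{Q} \to \mathcal{Q}/I\mathcal{Q}$, restrict it to $\Gamma$, and verify that the image lands in $(\mathcal{Q}/I\mathcal{Q})^{1}$ with kernel exactly $\Gamma(I)$; the bound on the index is then just the first isomorphism theorem.

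More concretely, I would proceed as follows. First, using the presentation of $\Gamma=\Spin_{f}(\mathcal{O}_{k})$ recorded in Section \ref{congruencesubgroups}, note that every $s\in\Gamma$ lies in $\mathcal{Q}$, so $\pi_I$ makes sense on $\Gamma$. From the explicit description of $\Gamma(I)$ in that same section (the elements whose non-constant coefficients lie in $I$ and whose real part is congruent to $1$ mod $I$), it is immediate that $\ker(\pi_I\!\restriction_\Gamma)=\Gamma(I)$. Hence $\pi_I$ induces an injection $\Gamma/\Gamma(I)\hookrightarrow \mathcal{Q}/I\mathcal{Q}$, so the index $[\Gamma:\Gamma(I)]$ equals the cardinality of $\pi_I(\Gamma)$.

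Second, I would check that $\pi_I(\Gamma)\subseteq (\mathcal{Q}/I\mathcal{Q})^{1}$. If $s\in\Gamma$, the relation $ss^{*}=1$ in $\mathcal{Q}$ passes through $\pi_I$ to give $\bar{s}\bar{s}^{*}=1$ in $\mathcal{Q}/I\mathcal{Q}$, since the map $*$ is compatible with reduction modulo $I\mathcal{Q}$. For the second condition, the key observation is that the elements of $\Gamma$ preserve not just $E=\bigoplus_{i}\mathbb{R}e_{i}$ but its $\mathcal{O}_{k}$-structure: for each basis vector $e_{i}$, the coefficients of $se_{i}s^{*}$ in the basis $\{e_{M}\}$ of $\mathcal{Q}$ lie in $\mathcal{O}_{k}$, and since $se_{i}s^{*}\in E$ by definition of $\Spin_{f}$, we actually have $se_{i}s^{*}\in\widetilde{E}:=\bigoplus_{i}\mathcal{O}_{k}e_{i}$. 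Reducing modulo $I\mathcal{Q}$ gives $\bar{s}\bar{e}_{i}\bar{s}^{*}\in\bar{E}$, whence $\bar{s}\bar{E}\bar{s}^{*}\subseteq\bar{E}$; the reverse inclusion follows by applying the same reasoning to $s^{-1}=s^{*}$, so $\bar{s}\bar{E}\bar{s}^{*}=\bar{E}$.

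Third, combining these two steps yields
\begin{equation*}
[\Gamma:\Gamma(I)] \;=\; |\pi_{I}(\Gamma)| \;\leq\; \bigl|(\mathcal{Q}/I\mathcal{Q})^{1}\bigr|,
\end{equation*}
which is the desired inequality. The only non-bookkeeping point is the verification that $s\in\Gamma$ implies $se_{i}s^{*}\in\widetilde{E}$ (i.e.\ that the coefficients are actually integral, not merely real), but this is immediate from the fact that $s,e_{i},s^{*}\in\mathcal{Q}$ and that $\mathcal{Q}$ is an $\mathcal{O}_{k}$-subalgebra of $\mathscr{C}(f,\mathbb{R})$.
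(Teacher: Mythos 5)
Your proposal is correct and follows essentially the same route as the paper: restrict the reduction map $\pi_I$ to $\Gamma$, observe that its image lies in $(\mathcal{Q}/I\mathcal{Q})^{1}$ and its kernel is exactly $\Gamma(I)$, and conclude by the first isomorphism theorem. You are somewhat more explicit than the paper (which dispatches the containment $\pi_I(\Gamma)\subseteq(\mathcal{Q}/I\mathcal{Q})^{1}$ with the phrase ``by definition''), and the only small slip is the closing claim that $e_i\in\mathcal{Q}$ --- the $e_i$ are odd, hence not in $\mathcal{Q}$, but your actual argument only needs that $s,s^{*}$ have $\mathcal{O}_k$-coefficients and $e_i$ is a basis vector, so the product $se_is^{*}$ indeed has $\mathcal{O}_k$-coefficients and the conclusion stands.
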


\begin{proof}
It was observed in the end of Section \ref{congruencesubgroups} that the group $\Gamma=\Spin_{f}(\mathcal{O}_{k})$ can be represented in the form

$$\Gamma=\Bigg\{ s\in\mathcal{Q}\mid sEs^{*}=E  \hspace{2mm}\mbox{and}\hspace{2mm} ss^{*}=1 \Bigg\}.$$

By definition, the projection map $\pi:\mathcal{Q}\rightarrow \mathcal{Q}/I\mathcal{Q}$ reduces to a group homomorphism $\pi|_{\Gamma}:\Gamma\rightarrow \left(\mathcal{Q}/I\mathcal{Q}\right)^{1}$ with kernel equal to $\Gamma(I).$ Then, for any ideal $I\subset\mathcal{O}_{k}$ we obtain that $$[\Gamma:\Gamma(I)]\leq \lvert \left(\mathcal{Q}/I\mathcal{Q}\right)^{1}\rvert.$$

\end{proof}

Before bounding from above the cardinality of $\left(\mathcal{Q}/I\mathcal{Q}\right)^{1}$, we require an auxiliary result.

\begin{lemma}
Let $\mathbb{F}$ be a finite field, $n\geq 2$ and $f=b_{1}x_{1}^{2}+\cdots+b_{n+1}x_{n+1}^{2}$ be a quadratic form with $b_{i}\in\mathbb{F}^{\times}$, then $f$ is non-degenerate and isotropic.
\end{lemma}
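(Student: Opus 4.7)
The plan is to prove non-degeneracy and isotropy separately.

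For non-degeneracy, I would observe that since the paper works in characteristic different from $2$, the Gram matrix of the symmetric bilinear form $\Phi$ associated to $f$ in the standard basis is the diagonal matrix $\mathrm{diag}(b_{1},\ldots,b_{n+1})$. This is invertible because each $b_{i}\in\mathbb{F}^{\times}$, so $f$ is non-degenerate.

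For isotropy, the key reduction is that it suffices to handle the ternary case $n=2$: any nontrivial zero $(x_{1},x_{2},x_{3})$ of the subform $b_{1}x_{1}^{2}+b_{2}x_{2}^{2}+b_{3}x_{3}^{2}$ extends to a nontrivial zero of $f$ by setting $x_{4}=\cdots=x_{n+1}=0$. So I may assume $n=2$. For the ternary form, I would reduce further to finding $(x_{1},x_{2})\in\mathbb{F}^{2}$ with $b_{1}x_{1}^{2}+b_{2}x_{2}^{2}=-b_{3}$, since then $(x_{1},x_{2},1)$ is a nontrivial zero. The standard pigeonhole argument then finishes it: let $q=|\mathbb{F}|$ and consider the sets
\[
A=\{\,b_{1}x^{2}\mid x\in\mathbb{F}\,\},\qquad B=\{\,-b_{3}-b_{2}y^{2}\mid y\in\mathbb{F}\,\}.
\]
Since $\mathrm{char}(\mathbb{F})\neq 2$, the squaring map is two-to-one on $\mathbb{F}^{\times}$, so each of $A$ and $B$ has cardinality $(q-1)/2+1=(q+1)/2$. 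Because $|A|+|B|=q+1>|\mathbb{F}|$, the two sets must intersect, and any element of $A\cap B$ produces the desired pair $(x_{1},x_{2})$.

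I do not expect any genuine obstacle here; the statement is a well-known classical fact (indeed a special case of the theorem that every non-degenerate quadratic form in at least three variables over a finite field of odd characteristic is isotropic). The only small subtlety is making sure that the reduction to three variables is explicit, which is straightforward as above.
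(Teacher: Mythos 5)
Your proof is correct and follows essentially the same pigeonhole argument as the paper: both reduce to a ternary subform and count the $(|\mathbb{F}|+1)/2$ values taken by each of two quadratic expressions to force an intersection (the paper fixes $x_1=1$ and proves universality of the ternary form, while you fix $x_3=1$ and target $-b_3$ directly, but this is an inessential difference).
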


\begin{proof}
The fact that $f$ is non-degenerate follows from the diagonal form of $f$. For the second part it is enough to prove that the quadratic form $g=b_{1}x_{1}^{2}+b_{2}x_{2}^{2}+b_{3}x_{3}^{2}$ is universal. For $c\in\mathbb{F}$ the sets $A=\{b_{1}+b_{2}y^{2}\mid y\in\mathbb{F}\}$ and $B=\{c-b_{3}z^{2}\mid z\in\mathbb{F}\}$ have the same cardinality, which it is equals to $\frac{\lvert\mathbb{F}\rvert+1}{2}$, therefore $A\cap B\neq\emptyset$ and there exist $y_{2}$ and $y_{3}$ in $\mathbb{F}$ such that

$$b_{1}y_{1}^{2}+b_{2}y_{2}^{2}+b_{3}y_{3}^{2}=c$$

with $y_{1}=1$.\qedhere

\end{proof}

\begin{proposition}\label{finiteset}
There exists a finite set $S$ of prime ideals in $\mathcal{O}_{k}$ such that for any ideal $I$ which is not divisible for any element in $S$ we have
$$\lvert \left(\mathcal{Q}/I\mathcal{Q}\right)^{1}\rvert\leq\N(I)^{\frac{n(n+1)}{2}}.$$

In particular, for those ideals we have that $[\Gamma:\Gamma(I)]\leq\N(I)^{\frac{n(n+1)}{2}}$.
\end{proposition}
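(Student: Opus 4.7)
The plan is to reduce to prime powers via the Chinese Remainder Theorem and then combine the classical order bound for finite orthogonal groups with a Hensel-type bound on the kernel of reduction modulo successive powers of a prime.

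I would let $S$ be the (finite) set of prime ideals of $\mathcal{O}_{k}$ that contain $2a_{1}a_{2}\cdots a_{n+1}\in\mathcal{O}_{k}$. For $P\notin S$, the residue field $\mathbb{F}_{P}:=\mathcal{O}_{k}/P$ has characteristic different from $2$ and the reduction $\bar f$ of $f$ modulo $P$ has all diagonal coefficients in $\mathbb{F}_{P}^{\times}$, so by the previous lemma it is non-degenerate and isotropic. Assume now that $I$ is coprime to every element of $S$ and factor $I=P_{1}^{e_{1}}\cdots P_{r}^{e_{r}}$. The Chinese Remainder Theorem yields a ring isomorphism $\mathcal{Q}/I\mathcal{Q}\simeq\prod_{i}\mathcal{Q}/P_{i}^{e_{i}}\mathcal{Q}$ that is compatible with $*$ and sends $\bar E$ factor-by-factor to the analogous subsets, hence
$$\lvert(\mathcal{Q}/I\mathcal{Q})^{1}\rvert=\prod_{i=1}^{r}\lvert(\mathcal{Q}/P_{i}^{e_{i}}\mathcal{Q})^{1}\rvert.$$
Since $\N(I)=\prod_{i}\N(P_{i})^{e_{i}}$, it suffices to prove the bound $\lvert(\mathcal{Q}/P^{e}\mathcal{Q})^{1}\rvert\leq q^{e\,n(n+1)/2}$ for every $P\notin S$ and every $e\geq 1$, where $q=\N(P)$.

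For $e=1$, $(\mathcal{Q}/P\mathcal{Q})^{1}$ is by construction the finite group $\Spin_{\bar f}(\mathbb{F}_{P})$. Since $\bar f$ is isotropic over the finite field $\mathbb{F}_{P}$, the equality $\lvert\Spin_{\bar f}(\mathbb{F}_{P})\rvert=\lvert\SO_{\bar f}(\mathbb{F}_{P})\rvert$ quoted in Section \ref{Clifford algebras and spin group} applies, and the classical order formulas for finite orthogonal groups give $\lvert\SO_{\bar f}(\mathbb{F}_{P})\rvert\leq q^{n(n+1)/2}=q^{\dim\SO_{n+1}}$. For $e\geq 2$ I would argue inductively through the reduction homomorphism $\pi:(\mathcal{Q}/P^{e}\mathcal{Q})^{1}\to(\mathcal{Q}/P^{e-1}\mathcal{Q})^{1}$. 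An element of $\ker\pi$ has the form $1+u$ with $u\in P^{e-1}\mathcal{Q}/P^{e}\mathcal{Q}$, and since $P^{2(e-1)}\subseteq P^{e}$ the conditions $ss^{*}=1$ and $sEs^{*}=E$ linearize to $u+u^{*}=0$ and $[u,E]\subseteq E$ inside $\mathcal{Q}/P^{e}\mathcal{Q}$. This is precisely the defining system of the Lie algebra of $\Spin_{f}$ over $\mathbb{F}_{P}$; by smoothness of $\Spin_{f}$ on the localization $\mathcal{O}_{k}[(2a_{1}\cdots a_{n+1})^{-1}]$, which has relative dimension $n(n+1)/2$, this kernel is an $\mathbb{F}_{P}$-vector space of dimension at most $n(n+1)/2$, so $\lvert\ker\pi\rvert\leq q^{n(n+1)/2}$ and the induction closes.

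The main obstacle I anticipate is the careful verification of this last dimension count: identifying the linearized system with the Lie algebra of $\Spin_{f}$ at $P$ and checking that it has $\mathbb{F}_{P}$-dimension exactly $n(n+1)/2$ for every $P\notin S$, which amounts to the smoothness of the group scheme $\Spin_{f}$ away from the primes in $S$. Once the bound $\lvert(\mathcal{Q}/I\mathcal{Q})^{1}\rvert\leq\N(I)^{n(n+1)/2}$ is established, the ``in particular'' part of the statement is immediate from the preceding proposition $[\Gamma:\Gamma(I)]\leq\lvert(\mathcal{Q}/I\mathcal{Q})^{1}\rvert$.
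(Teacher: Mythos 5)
Your proposal follows the same structural outline as the paper: the same set $S$ (primes dividing $2 a_{1}\cdots a_{n+1}$, i.e. twice the discriminant), the same reduction via the Chinese Remainder Theorem, the same identification $(\mathcal{Q}/\mathfrak{p}\mathcal{Q})^{1}=\Spin_{\bar f}(\mathbb{F}_{\mathfrak{p}})$ together with $\lvert\Spin_{\bar f}\rvert=\lvert\SO_{\bar f}\rvert\leq\N(\mathfrak{p})^{n(n+1)/2}$ in the base case, and the same induction through the reduction map $\theta\colon(\mathcal{Q}/\mathfrak{p}^{r+1}\mathcal{Q})^{1}\to(\mathcal{Q}/\mathfrak{p}^{r}\mathcal{Q})^{1}$. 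The genuine divergence is in how $\lvert\ker\theta\rvert$ is bounded. You propose to read the linearized conditions $u+u^{*}=0$, $[u,E]\subseteq E$ as the Lie algebra of $\Spin_{f}$ and then appeal to smoothness of the group scheme over $\mathcal{O}_{k}[(2a_{1}\cdots a_{n+1})^{-1}]$ to conclude $\dim_{\mathbb{F}_{\mathfrak{p}}}\ker\theta\leq n(n+1)/2$ --- and you correctly flag the identification of the Clifford-model scheme with a smooth integral model of $\Spin_{f}$ as the point requiring care. The paper sidesteps group scheme theory entirely and proves the bound by a direct Clifford computation: writing $\bar s=\bar 1+\bar t$ with $\bar t\in\mathfrak{p}^{r}\mathcal{Q}/\mathfrak{p}^{r+1}\mathcal{Q}$, the condition $\bar s\bar s^{*}=1$ forces $t_{\emptyset}\in\mathfrak{p}^{r+1}$ (using $2\notin\mathfrak{p}$), and from $\bar e_{i}\bar t-\bar t\bar e_{i}\in\bar E$ one computes that $\bar t_{M}(\bar e_{i}\bar e_{M}-\bar e_{M}\bar e_{i})$ equals $0$ if $i\notin M$ and $2\bar t_{M}\bar e_{i}\bar e_{M}$ if $i\in M$; since $\bar e_{i}\bar e_{M}\notin\bar E$ for $\lvert M\rvert\geq 4$ and $2$ is invertible modulo $\mathfrak{p}^{r+1}$, this forces $\bar t_{M}=0$ for $\lvert M\rvert\geq 4$. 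Hence $\ker\theta$ is contained in the set of $\bar 1+\sum_{\lvert M\rvert=2}\bar t_{M}\bar e_{M}$ with $\bar t_{M}\in\mathfrak{p}^{r}/\mathfrak{p}^{r+1}$, which has exactly $\N(\mathfrak{p})^{\binom{n+1}{2}}=\N(\mathfrak{p})^{n(n+1)/2}$ elements. In effect the paper carries out, in elementary terms, precisely the tangent-space dimension count that your smoothness appeal would deliver; your route is shorter if one is willing to quote the smooth integral model of $\Spin_{f}$, while the paper's is self-contained and makes the role of the set $S$ transparent.
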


\begin{proof}
Since $f=a_{1}x_{1}^{2}-a_{2}x_{2}^{2}-\cdots-a_{n+1}x_{n+1}^{2}$, with $a_{i}\in\mathcal{O}_{k}$ the discriminant $D_{f}$ of $f$ is equal to the product of its coefficients. Put $D=2D_{f}$ and let $S$ be the set of prime ideals which contain $D$.\\

Suppose that $I=\prod\mathfrak{p}_{i}^{r_{i}}$ is the decomposition of the ideal $I$ in prime ideals, and that none of the factors $\mathfrak{p}_{i}$ belongs to $S$. By the Chinese Remainder Theorem $\mathcal{Q}/I\mathcal{Q}\cong\prod\mathcal{Q}/\mathfrak{p}_{i}^{r_{i}}\mathcal{Q}$ and since the projection on each of the components preserves the map $*$ this isomorphism restricts to    $(\mathcal{Q}/I\mathcal{Q})^{1}\cong\prod(\mathcal{Q}/\mathfrak{p}_{i}^{r_{i}}\mathcal{Q})^{1}$. We have reduced to prove the result for ideals of the form $I=\mathfrak{p}^{r}$ with $D\notin\mathfrak{p}$, which is equivalent to none of the coefficients $a_{1},\ldots,a_{n+1}$ neither $2$ belongs to $\mathfrak{p}$. For those ideals we will prove the result by induction on $r$.\\

For $r=1$, follows from the equations defining $(\mathcal{Q}/\mathfrak{p}\mathcal{Q})^{1}$ that this group coincides with the spin group $\Spin_{\bar{f}}(\mathbb{F}_{\N(\mathfrak{p})})$ of the quadratic form $\bar{f}$ over the finite field with $\N(\mathfrak{p})$ elements $\mathbb{F}_{\N(\mathfrak{p})}$, where $\bar{f}$ denotes the reduction modulo $\mathfrak{p}$ of $f$. Since none of the coefficients $a_{1},\ldots,a_{n+1}$ belongs to $\mathfrak{p}$ we have that $\overline{f}$ is a non-degenerate and isotropic form by the previous lemma. Since $2\notin\mathfrak{p}$ then char($\mathbb{F}_{\N(\mathfrak{p})}$)$\neq 2$ and we saw in the end of Section \ref{Clifford algebras and spin group} that $\lvert\SO_{\bar{f}}(\mathbb{F}_{\N(\mathfrak{p})})\rvert=\lvert\Spin_{\bar{f}}(\mathbb{F}_{\N(\mathfrak{p})})\rvert$. Now, it is known that the cardinality of the special orthogonal group $\SO_{\bar{f}}(\mathbb{F}_{\N(\mathfrak{p})})$ is bounded from above by $\N(\mathfrak{p})^{\frac{n(n+1)}{2}}$ (see e.g.  \cite[Sec. 3.7.2]{Wilson09}), then

\begin{equation*}
\lvert (\mathcal{Q}/\mathfrak{p}\mathcal{Q})^{1}\rvert=\lvert \Spin_{\bar{f}}\left(\mathbb{F}_{\N(\mathfrak{p})}\right)\rvert
=\lvert\SO_{\bar{f}}\left(\mathbb{F}_{\N(\mathfrak{p})}\right)\rvert\leq \N(\mathfrak{p})^{\frac{n(n+1)}{2}}.
\end{equation*}

Assume now that the result holds for $I=\mathfrak{p}^{r}$ with $r>1$. Consider the natural map

\begin{align*}
\theta: \mathcal{Q}/\mathfrak{p}^{r+1}\mathcal{Q}&\rightarrow \mathcal{Q}/\mathfrak{p}^{r}\mathcal{Q} \\
\bar{s} & \mapsto \bar{s}.
\end{align*}

which is well-defined because $\mathfrak{p}^{r+1}\subset\mathfrak{p}^{r}$. By \eqref{definition2} this map restricts to a group homomorphism $\theta|_{(\mathcal{Q}/\mathfrak{p}^{r+1}\mathcal{Q})^{1}}:(\mathcal{Q}/\mathfrak{p}^{r+1}\mathcal{Q})^{1}\rightarrow(\mathcal{Q}/\mathfrak{p}^{r}\mathcal{Q})^{1}$. To prove the result for $r+1$ we will determine the kernel of $\theta|_{(\mathcal{Q}/\mathfrak{p}^{r+1}\mathcal{Q})^{1}}$ and we will prove that $\lvert\ker(\theta|_{(\mathcal{Q}/\mathfrak{p}^{r+1}\mathcal{Q})^{1}})\rvert=\N(\mathfrak{p})^\frac{n(n+1)}{2}$.\\

If $\bar{s}\in(\mathcal{Q}/\mathfrak{p}^{r+1}\mathcal{Q})^{1}$ with $\theta(\bar{s})=\bar{1}$, then $\bar{s}=\bar{1}+\bar{t}$ with $\bar{t}\in\mathfrak{p}^{r}\mathcal{Q}/\mathfrak{p}^{r+1}\mathcal{Q}$. Replacing $\bar{s}=\bar{1}+\bar{t}$ in the equation $\bar{s}\bar{s}^{*}=1$ we have that $\bar{t}+\bar{t}^{*}=0$ in $\mathcal{Q}/\mathfrak{p}^{r+1}\mathcal{Q}$. In particular $t_{\emptyset}\in\mathfrak{p}^{r+1}$ because $2\notin\mathfrak{p}^{r+1}$. Now, the condition $\bar{s}\bar{E}\bar{s}^{*}=\bar{E}$ implies that 

$$\bar{s}\bar{e_{i}}\bar{s}^{*}=(\bar{1}+\bar{t})\bar{e_{i}}(\bar{1}-\bar{t})=\bar{e_{1}}-\bar{e_{i}}\bar{t}+
\bar{t}\bar{e_{i}}\in\bar{E}$$  
for any $i=1,\ldots,n+1.$ Then the expression $\bar{e_{i}}\bar{t}-\bar{t}\bar{e_{i}}$ lies in $\bar{E}$ for any $i=1,\dots,n+1$. Now, if we write $\bar{t}=\sum_{\lvert M\rvert\hspace{1mm}\mbox{even}}\bar{t}_{M} \bar{e}_{M}$ we have by direct computation that

\[
\bar{t}_{M}(\bar{e_{i}}\bar{e}_{M}-\bar{e}_{M}\bar{e_{i}})=
\begin{cases}

\text{0,} &\quad\text{if $i\notin M.$}\\
\text{$2\bar{t}_{M}\bar{e_{i}}\bar{e}_{M}$,} &\quad\text{if $i\in M.$}\
\end{cases}
\]

Since $\bar{e_{i}}\bar{e}_{M}\notin\bar{E}$ for $\lvert M\rvert\geq 4$ and $2\notin\mathfrak{p}^{r+1}$ we have that $\bar{t}_{M}=0$ in $\mathcal{Q}/\mathfrak{p}^{r+1}\mathcal{Q}$ if $\lvert M\rvert\geq 4.$ From this we obtain that

$$\ker(\theta|_{(\mathcal{Q}/\mathfrak{p}^{r+1}\mathcal{Q})^{1}})=\Bigg\{ \bar{1}+\sum_{\lvert M\rvert=2}\bar{t}_{M}\bar{e}_{M}\mid \bar{t}_{M}\in \mathfrak{p}^{r}/\mathfrak{p}^{r+1}\Bigg\}.$$

Since the cardinality of the quotient $\mathfrak{p}^{r}/\mathfrak{p}^{r+1}$ is equal to $\N(\mathfrak{p})$ we have that $\lvert\ker(\theta|_{(\mathcal{Q}/\mathfrak{p}^{r+1}\mathcal{Q})^{1}})\rvert=\N(\mathfrak{p})^{\frac{n(n+1)}{2}}$. By induction we conclude that

$$\lvert(\mathcal{Q}/\mathfrak{p}^{r+1}\mathcal{Q})^{1}\rvert\leq\lvert\ker(\theta|_{(\mathcal{Q}/\mathfrak{p}^{r+1}\mathcal{Q})^{1}})\rvert\lvert(\mathcal{Q}/\mathfrak{p}^{r}\mathcal{Q})^{1}\rvert\leq\N(\mathfrak{p}^{r+1})^{\frac{n(n+1)}{2}}.$$
\end{proof}

\section{Proof of the main theorem}\label{proofoftheorem}

We can now to put all the pieces together and prove the theorem.

\begin{theorem}\label{maintheorem}
Let $\Gamma$ be an arithmetic subgroup of $\Spin(1,n)$ of the first type defined over a totally real number field $k$. There exists a finite set S of prime ideals in $\mathcal{O}_{k}$ such that for any sequence of ideals $I$ with no prime factors in $S$ the principal congruence subgroups $\Gamma(I)$ eventually satisfy

$$\sys(M_{I})\geq \frac{8}{n(n+1)}\log(\vol(M_{I}))-c,$$
where $M_{I}=\Gamma(I)\backslash \mathbb{H}^{n}$ and $c$ is a constant independent on $I$.
\end{theorem}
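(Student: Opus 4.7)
The plan is to combine Propositions \ref{mainproposition} and \ref{finiteset} directly. Proposition \ref{mainproposition} bounds the displacement of every non-trivial element of $\Gamma(I)$ from below by $4\log\N(I)-4d\log 2$ whenever $\N(I)\geq 2^{d}$, while Proposition \ref{finiteset} bounds the index $[\Gamma:\Gamma(I)]$, and therefore $\vol(M_I)$, from above by $\N(I)^{n(n+1)/2}$ whenever $I$ avoids a finite set $S$ of bad primes. Matching these two estimates produces exactly the asymptotic constant $\frac{8}{n(n+1)}$.

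First I would reduce to the case $\Gamma=\Spin_{f}(\mathcal{O}_{k})$ for an admissible quadratic form $f$. By the definition of arithmetic lattices of the first type, a general $\Gamma$ is commensurable with such a reference group $\Gamma_{0}$; a standard argument shows that for $I$ of sufficiently large norm the congruence subgroups $\Gamma(I)$ and $\Gamma_{0}(I)$ agree up to a uniformly bounded index, so passing between them perturbs $\log\vol(M_I)$ only by an additive constant which can be absorbed into $c$. From here on I take $\Gamma=\Spin_{f}(\mathcal{O}_{k})$ in the matrix model of Section \ref{congruencesubgroups}.

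Let $S$ be the finite set of prime ideals given by Proposition \ref{finiteset} and set $V_{0}=\vol(\Gamma\backslash\mathbb{H}^{n})$, a constant depending only on $\Gamma$. Fix an ideal $I$ with no prime factor in $S$ and with $\N(I)\geq 2^{d}$; both conditions hold along any sequence of ideals as in the hypothesis once $\N(I)$ is large enough, justifying the word ``eventually''. By Proposition \ref{finiteset},
$$\vol(M_{I})=[\Gamma:\Gamma(I)]\,V_{0}\;\leq\;V_{0}\,\N(I)^{n(n+1)/2},$$
which rearranges to
$$\log\N(I)\;\geq\;\frac{2}{n(n+1)}\bigl(\log\vol(M_{I})-\log V_{0}\bigr).$$
On the other hand, Proposition \ref{mainproposition} shows that every non-trivial $s\in\Gamma(I)$ displaces every point of $\mathbb{H}^{n}$ by at least $4\log\N(I)-4d\log 2$, so $\Gamma(I)$ is torsion-free, $M_I$ is a hyperbolic manifold, and the length of every closed geodesic of $M_I$ (which equals the translation length of a representative loxodromic element) is bounded below by $4\log\N(I)-4d\log 2$. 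Substituting the previous display into this lower bound gives
$$\sys(M_{I})\;\geq\;\frac{8}{n(n+1)}\log\vol(M_{I})-c, \qquad c=\frac{8}{n(n+1)}\log V_{0}+4d\log 2,$$
with $c$ independent of $I$, as desired.

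The only non-mechanical ingredient is the commensurability reduction in the second paragraph: one must verify that, once finitely many primes are removed and $\N(I)$ is taken large enough, the congruence subgroups of a general $\Gamma$ of the first type and those of the reference group $\Spin_{f}(\mathcal{O}_{k})$ differ only by a bounded index, so that the estimates of Sections \ref{displacement}--\ref{index} apply verbatim. Everything else is a direct assembly of the two main propositions, and the structural reason the coefficient is $\frac{8}{n(n+1)}$ is precisely the match between the factor $4$ in Proposition \ref{mainproposition} and the exponent $\frac{n(n+1)}{2}$ in Proposition \ref{finiteset}.
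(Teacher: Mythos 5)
Your proof is correct and follows essentially the same route as the paper: reduce to $\Gamma=\Spin_{f}(\mathcal{O}_{k})$ via commensurability, apply Proposition \ref{mainproposition} for the lower bound $\sys(M_I)\geq 4\log\N(I)-4d\log 2$, apply Proposition \ref{finiteset} for the upper bound $[\Gamma:\Gamma(I)]\leq\N(I)^{n(n+1)/2}$, and convert through $\vol(M_I)=[\Gamma:\Gamma(I)]\vol(M)$, arriving at the same constant $c=\frac{8}{n(n+1)}\log\vol(M)+4d\log 2$. The paper handles the initial reduction with a one-line appeal to ``passing to a finite sheet covering,'' while you phrase it in terms of a uniformly bounded-index comparison of $\Gamma(I)$ and $\Gamma_0(I)$; both leave the same amount implicit and neither affects the argument.
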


\begin{proof}
Without loss of generality we can assume that there exists an admissible quadratic form $f$ which is diagonal with coefficients in the ring $\mathcal{O}_{k}$ and $\Gamma=\Spin_{f}(\mathcal{O}_{k})$. These assumptions may require passing to a finite sheet covering, which is compatible with the statement of the theorem.\\

Now, if $\alpha$ is a closed geodesic on $M_{I}=\Gamma(I)\backslash\mathbb{H}^{n}$ with the length equal to $\sys(M_{I})$, then there exists an element $s\in\Gamma(I)$ and a point $x\in\mathbb{H}^{n}$ such that $d(x,\varphi_{s}(x))=\ell(\alpha)=\sys(M_{I})$. If the norm if $I$ is large enough, Proposition \ref{mainproposition} implies that

$$\sys(M_{I})\geq 4\log(\N(I))-4d\log(2).$$

Choosing the set $S$ and the ideals $I$ as in Proposition \ref{finiteset} we conclude that 

$$\sys(M_{I})\geq \frac{8}{n(n+1)}\log([\Gamma:\Gamma(I)])-4d\log(2).$$

Since $\vol(M_{I})=[\Gamma:\Gamma(I)]\vol (M)$ with $M=\Gamma\backslash\mathbb{H}^{n}$, then if the norm of the ideal $I$ is large enough, we obtain the desired inequality with $c=4d\log(2)+\frac{8}{n(n+1)}\log(\vol(M))$. The systole for other $I$ can be compensated by enlarging the constant
\end{proof}

\textit{Remark}: As we mentioned in the introduction, the systole of congruence coverings of a compact arithmetic hyperbolic manifold $M$ has logarithmic growth with respect to the volume of $M$. For any compact hyperbolic manifold, an upper bound for the systole can be obtained from the fact that the injectivity radius is equal to $\frac{\sys(M)}{2}$, but this argument does not apply in the non-compact case. In particular, if $k=\mathbb{Q}$ and $n\geq 4$ all the examples considered in this work are non-cocompact and we would like to have an upper bound for $\sys(M_{I})$ in terms of logarithm of the volume. Recently, M. Gendulphe \cite{Gen15} overcame this problem showing that for any non-compact hyperbolic manifold with finite volume $$\sys(M)\leq 2\log(\vol(M))+d,$$ with $d$ an explicit constant depending on the dimension of $M$. For principal congruence coverings of a compact arithmetic hyperbolic manifold of the first type we will obtain a better upper bound of the systole in the next section (Corollary \ref{upperbound}).

\section{Systolic genus of hyperbolic manifolds}\label{systolicgenus}

Historically, sequences of congruence coverings have been used in many different contexts. We would like to apply Theorem \ref{maintheorem} to improve some results obtained in the last years. The first application concerns to the systolic genus of arithmetic hyperbolic manifolds studied by M. Belolipetsky in \cite{Bel13}.\\

If we denote by $S_{g}$ a Riemann surface of genus $g\geq 1$, then the \textit{systolic genus} of a Riemannian manifold $M$ is defined by 
$$\sysg(M)=\min\lbrace g \mid \pi_{1}(M)\hspace{1mm} \mbox{contains} \hspace{1mm} \pi_{1}(S_{g})\rbrace.$$

The main result in \cite{Bel13} relates the systolic genus $\sysg(M)$ of a hyperbolic manifold $M$ with the systole $\sys(M)$.

\begin{theorem}{\cite[Theorem 5.1]{Bel13}}.\label{beltheorem1}
Let $M$ be a closed $n$-dimensional hyperbolic manifold. For any $\epsilon >0$, assuming that $\sys(M)$ is sufficiently large, we have

\begin{equation*}
\sysg(M)\geq e^{(\frac{1}{2}-\epsilon)\sys(M)}.\\
\end{equation*}

\end{theorem}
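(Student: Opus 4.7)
The plan is to realize the abstract subgroup inclusion $\pi_{1}(S_{g})\hookrightarrow\pi_{1}(M)$ geometrically and exploit the negative curvature of $M$ to turn information about $\sys(M)$ into information about $g$. First I would set $g=\sysg(M)$ and choose a $\pi_{1}$-injective map $f:S_{g}\to M$. I would then homotope $f$ to a map of minimal area in its homotopy class (using the Sacks--Uhlenbeck / Schoen--Yau theory for incompressible surfaces in a manifold of non-positive curvature), obtaining a (branched) minimal immersion whose induced metric $h=f^{*}g_{M}$ on $S_{g}$ satisfies $K_{h}\leq -1$ by the Gauss equation (the ambient sectional curvature is $-1$ and the second-fundamental-form term contributes non-positively at a minimal surface).

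The induced metric $h$ has two useful properties. First, since $f_{\ast}:\pi_{1}(S_{g})\to\pi_{1}(M)$ is injective, every non-contractible loop in $(S_{g},h)$ maps to a non-contractible loop in $M$ of the same or shorter length, so
\begin{equation*}
\sys(S_{g},h)\ \geq\ \sys(M).
\end{equation*}
Second, Gauss--Bonnet combined with $K_{h}\leq -1$ gives
\begin{equation*}
\area(S_{g},h)\ =\ \int_{S_{g}} 1\, dA\ \leq\ \int_{S_{g}} |K_{h}|\, dA\ =\ 2\pi|\chi(S_{g})|\ =\ 4\pi(g-1).
\end{equation*}

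Next I would apply a volume comparison at any point $p\in S_{g}$. Because $\sys(S_{g},h)\geq\sys(M)$, the ball $B(p,\sys(M)/2)$ is embedded in $(S_{g},h)$, and since $K_{h}\leq -1$ the Günther/Bishop volume comparison gives
\begin{equation*}
\area\!\bigl(B(p,\sys(M)/2)\bigr)\ \geq\ 2\pi\bigl(\cosh(\sys(M)/2)-1\bigr).
\end{equation*}
Combining with the Gauss--Bonnet upper bound yields
\begin{equation*}
2\pi\bigl(\cosh(\sys(M)/2)-1\bigr)\ \leq\ 4\pi(g-1),
\end{equation*}
and since $\cosh(x)\sim \tfrac{1}{2}e^{x}$ for large $x$, for any $\epsilon>0$ one gets $g\geq e^{(1/2-\epsilon)\sys(M)}$ once $\sys(M)$ is large enough, which is exactly the claimed inequality.

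The main obstacle is step one, namely producing a sufficiently well-behaved surface representing the subgroup $\pi_{1}(S_{g})$ so that the Gauss-equation curvature bound $K_{h}\leq -1$ is actually available. In dimension $n=3$ one can bypass the PDE and simply use a pleated surface (which is intrinsically hyperbolic, so $K_{h}\equiv -1$ away from the pleating locus). In higher dimensions one really does need the existence theory for incompressible minimal surfaces together with a careful treatment of possible branch points (where the induced metric is degenerate but still has the right curvature estimate in a weak sense). Replacing the minimal-surface input by a harmonic-map representative (which exists by Eells--Sampson in this negatively curved target and is an immersion for $g\geq 2$) is an equally viable route; the downstream geometric estimate is identical.
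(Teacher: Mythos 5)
Your main line of argument is correct and is, as far as I can tell, essentially the one used in \cite{Bel13}: realize the inclusion $\pi_1(S_g)\hookrightarrow\pi_1(M)$ by a $\pi_1$-injective map, replace it by an area-minimizing (branched) conformal minimal immersion via Sacks--Uhlenbeck/Schoen--Yau (possible since $\pi_2(M)=0$ for a closed hyperbolic manifold), use the Gauss equation and minimality to get $K_h\leq -1$ for the pullback metric, and then play Gauss--Bonnet against a G\"{u}nther-type lower bound on the area of an embedded ball of radius $\sys(M)/2$, whose embeddedness follows from $\operatorname{inj}(p)\geq\sys(S_g,h)/2\geq\sys(M)/2$ in a nonpositively curved surface. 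The computation $\cosh(\sys(M)/2)-1\leq 2(g-1)$ yields the stated bound. Two comments on the technical substance. First, branch points are genuinely possible in higher codimension, but they only help: a branch point is a cone point of angle $>2\pi$ and contributes \emph{negative} concentrated curvature, so the Gauss--Bonnet estimate $\area(S_g,h)\leq 4\pi(g-1)$ is preserved (and the area lower bound on balls also survives, since a cone of excess angle only increases area). You should say a word about this rather than leaving it implicit. Second, and more seriously, your final sentence is wrong: an Eells--Sampson harmonic representative with a \emph{fixed} conformal structure on $S_g$ is \emph{not} an equally viable input. The inequality $K_h\leq -1$ for the pullback metric comes from the Gauss equation applied to a minimal immersion; harmonicity alone does not make the image a minimal surface (that requires weak conformality), and for a non-conformal harmonic map there is no control on the sign of the second fundamental form terms, so the ``downstream geometric estimate'' is precisely what breaks. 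What you actually need is a conformal harmonic map, i.e.\ a (branched) minimal immersion, which is why one must also minimize over the conformal structure on $S_g$ as Sacks--Uhlenbeck and Schoen--Yau do. The pleated-surface shortcut you mention is fine in dimension $3$ but does not replace the minimal-surface machinery in general dimension.
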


Concerning the congruence coverings in dimension $n\geq 3$, the following result is proved in \cite{Bel13}:

\begin{proposition}{\cite[Proposition 5.3]{Bel13}}.\label{beltheorem2}
Let $\Gamma$ be a fundamental group of a closed arithmetic hyperbolic manifold of dimension $n\geq 3$.

\begin{enumerate}[label=(\Alph*)]
\item There exists constant $C>0$ such that for a decreasing sequence $\Gamma_{i}<\Gamma$ of congruence subgroups of $\Gamma$, the corresponding quotient manifolds $M_{i}=\Gamma_{i}\backslash\mathbb{H}^{n}$ satisfy
\begin{equation*}
\log\sysg(M_{i})\gtrsim C\log(\vol(M_{i})), \mbox{as}\hspace{2mm} i\rightarrow\infty.
\end{equation*}

\item If $\Gamma$ is of the first type, the sequence of principal congruence subgroups $\Gamma_{I}$ associated to prime ideals satisfy
\begin{equation*}
\sysg(M_{I})\lesssim\vol(M_{I})^{\frac{6}{n(n+1)}.},\hspace{2mm} \mbox{as}\hspace{2mm} \N(I)\rightarrow\infty,
\end{equation*}

where $M_{I}=\Gamma_{I}\backslash\mathbb{H}^{n}.$

\end{enumerate}
\end{proposition}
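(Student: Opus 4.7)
The plan is to handle parts (A) and (B) separately. Part (A) I would deduce by combining Theorem \ref{beltheorem1} with the general logarithmic lower bound on the systole of congruence coverings referenced in the introduction, while part (B) I would prove by exhibiting an explicit closed totally geodesic surface inside $M_I$ whose genus I can control through the index of a lower-rank spin subgroup.

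For part (A), I first apply Theorem \ref{beltheorem1} to the sequence $M_i$, giving $\log\sysg(M_i)\geq(\tfrac{1}{2}-\epsilon)\sys(M_i)$ for $i$ large enough. It then suffices to invoke the fact (attributed in the introduction to \cite{Gro96} and \cite{GL13}) that the systole of congruence coverings of any arithmetic hyperbolic manifold grows at least logarithmically with the volume, i.e.\ $\sys(M_i)\geq c'\log\vol(M_i)$ for some constant $c'>0$ independent of $i$. Combining the two inequalities yields $\log\sysg(M_i)\gtrsim C\log\vol(M_i)$ with $C=(\tfrac{1}{2}-\epsilon)c'$.

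For part (B), since $\Gamma$ is of the first type I may assume, after passing to a commensurable subgroup, that $\Gamma=\Spin_f(\mathcal{O}_k)$ for a diagonal admissible quadratic form $f=a_1x_1^2-a_2x_2^2-\cdots-a_{n+1}x_{n+1}^2$. Because $n\geq 3$, the restriction $f_3=a_1x_1^2-a_2x_2^2-a_3x_3^2$ has signature $(1,2)$ over $\mathbb{R}$ and remains admissible, and $f_3$ is anisotropic over $k$ since $f$ is (as $M$ is closed). Thus $\Gamma_0:=\Spin_{f_3}(\mathcal{O}_k)\hookrightarrow\Gamma$ is a cocompact arithmetic lattice in $\Spin(1,2)$, producing a closed totally geodesic surface $S=\Gamma_0\backslash\mathbb{H}^2\hookrightarrow M$. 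In the cover $M_I$, each connected component $S_I^\circ$ of the preimage of $S$ is again a closed totally geodesic surface, so $\pi_1(S_I^\circ)\hookrightarrow\pi_1(M_I)$ and therefore $\sysg(M_I)\leq\genus(S_I^\circ)$.

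The key step is to bound $\genus(S_I^\circ)$. Such a component is covered by $\Gamma_0(I)\backslash\mathbb{H}^2$ with $\Gamma_0(I)=\Gamma_0\cap\Gamma(I)$, so applying Proposition \ref{finiteset} to $f_3$ (i.e.\ with $n$ there equal to $2$) yields $[\Gamma_0:\Gamma_0(I)]\leq\N(I)^3$ for all but finitely many ideals $I$. Hence $\area(S_I^\circ)\lesssim\N(I)^3$, and Gauss-Bonnet gives $\genus(S_I^\circ)\lesssim\N(I)^3$. To convert this into a bound in $\vol(M_I)$ I need the matching lower bound $[\Gamma:\Gamma(I)]\gtrsim\N(I)^{n(n+1)/2}$, which comes from strong approximation for $\Spin_f$: away from finitely many primes the reduction map onto $\Spin_{\bar f}(\mathcal{O}_k/I)$ is surjective, and the target has order of magnitude $\N(I)^{n(n+1)/2}$. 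Combining yields $\sysg(M_I)\lesssim\N(I)^3\lesssim\vol(M_I)^{6/(n(n+1))}$, as claimed. The principal obstacle I anticipate is exactly this matching two-sided control on $[\Gamma:\Gamma(I)]$, an arithmetic input standard in this setting but requiring uniform application over the family of ideals.
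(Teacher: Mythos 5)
The paper itself only cites this proposition from Belolipetsky without reproducing a proof, so the natural comparison is with the closely related arguments the paper does carry out: Proposition~\ref{firstapplication} (for part~A) and the appendix's proof of Theorem~\ref{appendix} (for part~B). Your proof matches both in approach and substance.

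For part~(A), your combination of Theorem~\ref{beltheorem1} with a logarithmic lower bound on the systole of congruence covers is exactly what the paper does in Proposition~\ref{firstapplication}, where the non-explicit constant $c'$ is replaced by the explicit one from Theorem~\ref{maintheorem}. The only delicate point is that the logarithmic lower bound referenced in the introduction is stated there for regular (principal) congruence covers, while the proposition is phrased for arbitrary decreasing sequences of congruence subgroups; you should make sure the version you invoke really applies at this level of generality (or note, as Belolipetsky implicitly does, that the needed fact is a standard uniform displacement bound for congruence subgroups of arithmetic lattices). This is a citation-hygiene issue rather than a gap in the argument's logic.

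For part~(B), your construction is precisely the one used in the appendix: restrict the diagonal admissible form to a rank-$3$ subform $f_3$ of signature $(1,2)$, obtain a $\pi_1$-injective totally geodesic closed surface $\Gamma_0(I)\backslash\mathbb{H}^2$ inside $M_I$, bound its area by $[\Gamma_0:\Gamma_0(I)]\leq\N(I)^3$ via Proposition~\ref{finiteset} with $n=2$, and compare with $\vol(M_I)\sim\nu\,\N(I)^{n(n+1)/2}$ using strong approximation and the order of $\SO_{\bar f}$ over the residue field. Your remark that $f_3$ inherits anisotropy (hence cocompactness of $\Gamma_0$) from the closedness of $M$ is correct, and the $\Gamma_0\cap\Gamma(I)=\Gamma_0(I)$ compatibility you rely on does hold for the Clifford-algebra representation set up in Section~\ref{congruencesubgroups}. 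Overall this is the same proof, presented correctly.
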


We recall that for two positive functions $f(x)$ and $g(x)$, the relation $f(x)\gtrsim g(x)$ means that for any $\epsilon >0$ there exists $x_{0}$ depending on $\epsilon$ such that $f(x)\geq (1-\epsilon)g(x)$ for all $x\geq x_{0}$.\\

The explicit constant $C=\frac{1}{3}$ is known in dimension $n=3$ \cite[Proposition 3.1]{Bel13}, but in higher dimensions no explicit value of this constant was known so far. In this respect, we can apply Theorem \ref{maintheorem} to give a quantitative version of this result.

\begin{proposition}\label{firstapplication}
Let $\Gamma$ be a fundamental group of a closed arithmetic hyperbolic manifold of the first type of dimension $n\geq 3$. Then $\Gamma$ has a sequence of congruence subgroups $\Gamma_{i}$ such that the quotient manifolds $M_{i}=\Gamma_{i}\backslash\mathbb{H}^{n}$ satisfy

\begin{equation*}
\log\sysg(M_{i})\gtrsim \frac{4}{n(n+1)}\log(\vol(M_{i})), \hspace{4mm} \mbox{as}\hspace{2mm} i\rightarrow\infty.
\end{equation*}
\end{proposition}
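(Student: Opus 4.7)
The plan is to combine Theorem \ref{maintheorem} with Theorem \ref{beltheorem1} in the most direct way possible. The first of these produces a sequence of principal congruence coverings $M_I = \Gamma(I)\backslash \mathbb{H}^{n}$ whose systoles grow like $\frac{8}{n(n+1)}\log\vol(M_I)$, and the second converts a lower bound on the systole into a lower bound on the systolic genus of the form $\sysg(M) \geq e^{(\frac{1}{2} - \epsilon)\sys(M)}$. Taking logarithms turns the exponential into a linear inequality, and the product of the constants $\frac{1}{2}$ and $\frac{8}{n(n+1)}$ is exactly the claimed $\frac{4}{n(n+1)}$.

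Concretely, I would first select the sequence. Fix an admissible quadratic form $f$ so that (up to commensurability) $\Gamma$ equals $\Spin_{f}(\mathcal{O}_k)$, let $S$ be the finite set of prime ideals furnished by Theorem \ref{maintheorem}, and take any sequence of ideals $I_i \subset \mathcal{O}_k$ avoiding the primes in $S$ and with $\N(I_i) \to \infty$ (for instance, powers of a single prime not in $S$, or an enumeration of all such prime ideals). Set $\Gamma_i := \Gamma(I_i)$ and $M_i := \Gamma_i \backslash \mathbb{H}^{n}$. By Theorem \ref{maintheorem} there is a constant $c$ independent of $i$ such that
\begin{equation*}
\sys(M_i) \geq \frac{8}{n(n+1)} \log\vol(M_i) - c
\end{equation*}
for all sufficiently large $i$. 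Since $\vol(M_i) = [\Gamma:\Gamma(I_i)] \vol(M) \to \infty$, we also have $\sys(M_i) \to \infty$.

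Next, fix an arbitrary $\epsilon > 0$. Because $\sys(M_i) \to \infty$, Theorem \ref{beltheorem1} applies for all sufficiently large $i$, yielding
\begin{equation*}
\sysg(M_i) \geq e^{(\frac{1}{2} - \epsilon)\sys(M_i)}.
\end{equation*}
Taking logarithms and inserting the lower bound on $\sys(M_i)$ gives
\begin{equation*}
\log \sysg(M_i) \geq \left(\tfrac{1}{2} - \epsilon\right)\left(\tfrac{8}{n(n+1)} \log\vol(M_i) - c\right) = \tfrac{4 - 8\epsilon}{n(n+1)} \log\vol(M_i) - c',
\end{equation*}
where $c' = (\frac{1}{2} - \epsilon) c$ is independent of $i$. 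Since $\log\vol(M_i) \to \infty$ and $\epsilon > 0$ was arbitrary, this is precisely the definition of the asymptotic relation
\begin{equation*}
\log\sysg(M_i) \gtrsim \frac{4}{n(n+1)} \log\vol(M_i).
\end{equation*}

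There is essentially no obstacle: the argument is a two-line assembly of two already-established results, and the only points to be careful about are (i) matching the asymptotic definition of $\gtrsim$ (which permits absorbing the additive constant $c'$ and the factor $(1 - 2\epsilon)$ into the $\epsilon$-slack for $i$ large), and (ii) ensuring that the hypothesis of Theorem \ref{beltheorem1} (namely that $\sys(M_i)$ be sufficiently large) holds eventually, which follows from the divergence of the systole along the sequence.
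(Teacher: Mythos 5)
Your proof is correct and follows exactly the same route as the paper: select a sequence of principal congruence subgroups satisfying the lower bound of Theorem \ref{maintheorem}, then feed the resulting systole growth into Theorem \ref{beltheorem1} and take logarithms. You actually spell out the arithmetic and the hypothesis that $\sys(M_i)\to\infty$ more explicitly than the paper does, but the underlying argument is identical.
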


\begin{proof}

 By Theorem \ref{maintheorem}, there exist a sequence of principal congruence subgroups $\Gamma_{I}<\Gamma$ such that  $$\sys(M_{I})\gtrsim\frac{8}{n(n+1)}\log(\vol(M_{I})), \hspace{2mm} \mbox{as}\hspace{2mm} \N(I)\rightarrow\infty$$
The result now follows from Theorem \ref{beltheorem1}.\qedhere

\end{proof}

Joining together Theorem \ref{beltheorem1} with Part (B) of Proposition \ref{beltheorem2} we can obtain an upper bound for the systole of the manifolds considered in Theorem \ref{maintheorem} in the prime case. 

\begin{corollary}\label{upperbound}
Let $n\geq 3$ and let $\Gamma$ be an arithmetic subgroup of $\Spin(1,n)$ of the first type defined over a totally real number field $k$. Then for any sequence of prime ideals $I\subset\mathcal{O}_{k}$ the principal congruence subgroups $\Gamma(I)$ eventually satisfy

\begin{equation*}
\frac{8}{n(n+1)}\log\left(\vol(M_{I})\right)\lesssim
\sys\left(M_{I}\right)\lesssim\frac{12}{n(n+1)}\log\left(\vol(M_{I})\right),
\end{equation*}  
 as $\N(I)\rightarrow\infty$, where $M_{I}=\Gamma(I)\backslash \mathbb{H}^{n}.$
\end{corollary}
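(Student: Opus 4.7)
The plan is to derive the corollary as an immediate combination of three results already set up in the paper: Theorem \ref{maintheorem} supplies the lower bound directly, while the upper bound comes from composing the upper bound on $\sysg(M_I)$ from Proposition \ref{beltheorem2}(B) with the general lower bound on $\sysg$ in terms of $\sys$ given by Theorem \ref{beltheorem1}. Both Belolipetsky results assume the base manifold is closed, as is implicit here from the remark immediately preceding this section.

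For the lower bound, I would simply invoke Theorem \ref{maintheorem} on the given sequence of prime ideals $I$. The exceptional set $S$ in that theorem is finite, so all but finitely many $I$'s avoid $S$, and those satisfy $\sys(M_I) \geq \frac{8}{n(n+1)}\log\vol(M_I) - c$ with $c$ independent of $I$. With the convention $\gtrsim$ recalled just before the corollary, this reads exactly as $\sys(M_I) \gtrsim \frac{8}{n(n+1)}\log\vol(M_I)$ as $\N(I)\to\infty$.

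For the upper bound, fix $\epsilon > 0$. Proposition \ref{beltheorem2}(B) gives $\sysg(M_I) \leq (1+\epsilon)\vol(M_I)^{6/(n(n+1))}$ once $\N(I)$ is sufficiently large. Since the lower bound already established implies $\sys(M_I) \to \infty$ as $\N(I) \to \infty$, Theorem \ref{beltheorem1} applies eventually and yields $\sysg(M_I) \geq \exp\bigl((\tfrac{1}{2} - \epsilon)\sys(M_I)\bigr)$. Chaining the two bounds and taking logarithms,
\begin{equation*}
\left(\tfrac{1}{2} - \epsilon\right)\sys(M_I) \;\leq\; \log\sysg(M_I) \;\leq\; \frac{6}{n(n+1)}\log\vol(M_I) + \log(1+\epsilon),
\end{equation*}
which rearranges to
\begin{equation*}
\sys(M_I) \;\leq\; \frac{12}{(1 - 2\epsilon)\,n(n+1)}\log\vol(M_I) + O(1).
\end{equation*}
Since $\log\vol(M_I)\to\infty$ as $\N(I)\to\infty$ and $\epsilon>0$ was arbitrary, this is precisely $\sys(M_I) \lesssim \frac{12}{n(n+1)}\log\vol(M_I)$ in the sense of the paper's conventions.

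I do not anticipate any genuine obstacle — the argument is essentially bookkeeping. The only mildly delicate point is the quantifier chase hidden in $\lesssim$: one applies both Proposition \ref{beltheorem2}(B) and Theorem \ref{beltheorem1} with the same $\epsilon$, absorbs the additive $O(1)$ using $\log\vol(M_I)\to\infty$, and then shrinks $\epsilon$ at the end so that the coefficient $\frac{12}{(1-2\epsilon)\,n(n+1)}$ is squeezed down to $\frac{12}{n(n+1)}$ in the asymptotic sense.
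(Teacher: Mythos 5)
Your proposal is correct and follows exactly the route the paper indicates: the lower bound is Theorem \ref{maintheorem}, and the upper bound comes from chaining Proposition \ref{beltheorem2}(B) with Theorem \ref{beltheorem1}, just as the sentence preceding the corollary says. The quantifier bookkeeping you spell out (applying both $\epsilon$-statements together, absorbing the additive constant since $\log\vol(M_I)\to\infty$, then shrinking $\epsilon$) is precisely what the paper leaves implicit.
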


\section{Homological Codes}\label{homologicalcodes}

In this section we apply Theorem \ref{maintheorem} to homological codes. Motivated by a question of Z\'emor \cite{Zemir09}, L. Guth and A. Lubotzky constructed in 2013 a certain class of homological codes using congruence coverings of arithmetic hyperbolic 4-dimensional manifolds \cite{GL13}. For the definition of homological codes and the details of the construction using hyperbolic manifolds we refer the reader to \cite{GL13} and the references therein.\\

According to the known examples, Z\'emor \cite{Zemir09} asked if it is true that every $[[n,k,d]]$ homological quantum code satisfies the inequality $kd^{2}\leq n^{1+o(1)}$. E. Fetaya \cite{Fetaya12} proved that it holds for surfaces but L. Guth and A. Lubotzky \cite{GL13} gave counterexamples in dimension 4. The construction comes from congruence coverings of a compact $4$-dimensional arithmetic hyperbolic manifold. The main result in \cite{GL13} can be stated in the following way.

\begin{theorem}{\cite[Theorem 1]{GL13}}\label{GLtheorem1}
Let $M$ be a compact arithmetic hyperbolic $4$-manifold. There exist constants $\epsilon, \epsilon_{1}, \epsilon_{2}>0$ such that for a sequence of congruence coverings $M_{I}$ with triangulations $X$ the associated homological quantum codes constructed in $C^{2}\left(X,\mathbb{Z}_{2}\right)=C_{2}\left(X,\mathbb{Z}_{2}\right)$ are $[[n,\epsilon_{1}n,n^{\epsilon_{2}}]]$ codes and satisfy

$$kd^{2}\geq n^{1+\epsilon}.$$ 

\end{theorem}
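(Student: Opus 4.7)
The plan is to build the CSS quantum code directly from the chain complex of a triangulation of $M_I$ and to estimate its three parameters $(n,k,d)$ in turn. Fix a triangulation $X_0$ of the base manifold $M$ with bounded combinatorics and pull it back to each congruence covering to obtain a triangulation $X$ of $M_I$; set $n := \dim_{\mathbb{Z}_2} C_2(X,\mathbb{Z}_2)$. Bounded geometry together with $\vol(M_I) = [\Gamma : \Gamma(I)]\vol(M)$ gives $n \asymp \vol(M_I)$. The CSS code on $C_2(X,\mathbb{Z}_2)$ associated to the boundary maps $\partial_3$ and $\partial_2$ has $k = \dim_{\mathbb{Z}_2} H_2(X,\mathbb{Z}_2)$ and distance $d$ equal to the minimum Hamming weight of a non-trivial $\mathbb{Z}_2$-$2$-cycle (the cocycle side is identified with cycles by Poincar\'e duality in $\mathbb{Z}_2$).

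For the rate, I would appeal to Gauss--Bonnet in dimension four: $\chi(M_I) = c_4\vol(M_I)$ with $c_4 > 0$, so $b_2(M_I) - b_1(M_I) - b_3(M_I) + 2 \asymp \vol(M_I)$. Matsushima's formula, together with the vanishing of $(\mathfrak{g},K)$-cohomology in degrees $1$ and $3$ for the relevant irreducible unitary representations of $\SO(1,4)$, forces $b_1(M_I), b_3(M_I) = o(\vol(M_I))$. Hence $k = b_2(M_I) \asymp n$, producing some $\epsilon_1 > 0$.

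For the distance, this is where Theorem \ref{maintheorem} enters. Applied in dimension four, it yields $\inj(M_I) \geq \tfrac{1}{5}\log\vol(M_I) - c'$; write $\rho_I := \inj(M_I)$. A hyperbolic $4$-ball of radius $\rho_I$ has volume $\asymp e^{3\rho_I} \asymp \vol(M_I)^{3/5}$. Given a non-trivial $\mathbb{Z}_2$-$2$-cycle $z$ of combinatorial weight $A$, one applies the linear isoperimetric inequality for $2$-cycles in negatively curved spaces locally in embedded balls of radius $\rho_I$, where $z$ lifts isometrically to $\mathbb{H}^4$ and can be filled by a $3$-chain of volume $\lesssim A$. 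Patching such local fillings along a cover of the support of $z$ produces a global $3$-chain in $M_I$ bounded by $z$, contradicting the nontriviality of $[z]$ unless $A$ exceeds the threshold at which the fillings cease to fit inside balls of radius $\rho_I$; a careful quantitative version yields $A \gtrsim \vol(M_I)^{\epsilon_2} \asymp n^{\epsilon_2}$ for an explicit $\epsilon_2 > 0$ depending only on $\rho_I / \log\vol(M_I)$.

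The main obstacle is precisely this patching step: a general small-weight $2$-cycle need not lie inside any single ball of radius $\rho_I$, and the locally constructed fillings on overlapping balls must cancel consistently so that their $\mathbb{Z}_2$-sum is an honest boundary in $M_I$. I would handle this via a Mayer--Vietoris argument on a controlled good cover of the support of $z$ by balls of radius $\rho_I$, exploiting the simple combinatorial structure of such covers at scales below the injectivity radius together with a canonical cone construction in $\mathbb{H}^4$ to fix the local fillings. Once $d \gtrsim n^{\epsilon_2}$ is in place, the combination $kd^2 \gtrsim n \cdot n^{2\epsilon_2} = n^{1+\epsilon}$ with $\epsilon := 2\epsilon_2$ furnishes the required inequality.
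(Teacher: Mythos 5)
This statement is cited in the paper as \cite[Theorem 1]{GL13}; the paper offers no independent proof of it. The author's contribution in Section~\ref{homologicalcodes} is only to substitute the explicit lower bound $\inj(M_I)\geq\tfrac15\log\vol(M_I)-c$ obtained from Theorem~\ref{maintheorem} into the existing proof in \cite{GL13}, thereby producing numerical values for $\epsilon$, $\epsilon_1$, $\epsilon_2$. So there is no ``paper's own proof'' for your sketch to be measured against; it has to be measured against \cite{GL13} itself, which is what I do below.

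On the rate estimate: what you wrote is essentially correct but over-engineered. Once you have $\chi(M_I)=c_4\vol(M_I)$ with $c_4>0$ and Poincar\'e duality $b_1=b_3$, you get $b_2\geq\chi(M_I)-2\gtrsim\vol(M_I)$ directly, while bounded geometry of the pulled-back triangulation gives $n\lesssim\vol(M_I)$; together these already yield $k=b_2\geq\epsilon_1 n$. The appeal to Matsushima's formula and $(\mathfrak g,K)$-cohomology to bound $b_1,b_3$ is not needed for a lower bound on the rate.

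The distance estimate is where there is a genuine gap, and you have flagged it yourself. Your plan is to cover the support of a small $\mathbb Z_2$-$2$-cycle $z$ by injectivity-radius balls, fill locally in $\mathbb H^4$, and glue the local fillings by Mayer--Vietoris. The difficulty is structural: the restrictions $z|_{U_i}$ are not cycles, so ``small cycles in a contractible ball bound'' does not apply to them; and if instead you try to write $z=\sum_i z_i$ with $z_i$ supported in $U_i$, the $z_i$ again fail to be cycles. Even granting local $3$-chains $w_i$, in $\mathbb Z_2$ the sum $\sum_i w_i$ has boundary $\sum_i\partial w_i$, which need not equal $z$, and the discrepancy on overlaps can itself carry homology rather than be absorbed; nothing in a ``good cover at scales below $\inj$'' by itself forces the corrections to cancel. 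This is not how \cite{GL13} argue: their geometric core is a filling inequality in the spirit of Gromov, which shows outright that any $\mathbb Z_2$-$2$-cycle whose weight is below a threshold growing exponentially in $\inj(M_I)$ is null-homologous, without ever assembling a global filling from local ones. Until that mechanism (or a genuinely worked-out replacement) is in place, your proposal does not establish $d\gtrsim n^{\epsilon_2}$, and hence does not establish $kd^2\geq n^{1+\epsilon}$.

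Finally, a small remark on the use of Theorem~\ref{maintheorem}: \cite{GL13} predates it and used a non-explicit injectivity-radius bound; invoking Theorem~\ref{maintheorem} is exactly what Section~\ref{homologicalcodes} does to sharpen the constants, so that part of your sketch is aligned with the paper.
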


Related to Z\'emor's question, it could be interesting to obtain an explicit value of the constant $\epsilon>0$ in the previous result. Since the construction in \cite{GL13} make use of congruence coverings of arithmetic 4-dimensional hyperbolic manifolds, we can use Theorem \ref{maintheorem} to give an explicit value of this constant.\\

As we noted in Section \ref{Arithmetic subgroups of Spin(1,n)}, the fundamental group $\Gamma=\pi_{1}(M)$ embeds as an arithmetic subgroup of $\Spin(1,4)$ of the first type and it is defined over a totally real number field $k\neq\mathbb{Q}$. By compactness, the injectivity radius of $M_{I}$ is equal to $\frac{\sys(M_{I})}{2}$,
therefore taking ideals $I\subset \mathcal{O}_{k}$ as in Theorem \ref{maintheorem} with norm sufficiently large, the injectivity radius of $M_{I}$ satisfy
$$\inj(M_{I})\geq\frac{1}{5}\log(\vol(M_{I}))-c,$$

for some constant $c$ independent of $M_{I}$. Using this bound in the proof of Theorem \ref{GLtheorem1} in \cite{GL13}, we obtain that the distance of the codes in this construction satisfy $$d\geq c_{1}n^{0.2},$$

for some positive constant $c_{1}$. It is known that $d=O(n^{0.3})$ \cite[Remark 20]{GL13}, hence this bound is quite close to the optimal one. To conclude with the estimate in Theorem \ref{GLtheorem1}, recall that the dimension $k$ of the code satisfy $$k\geq c_{2}n,$$

for some positive constant $c_{2}$ if the norm of the ideal $I$ is sufficiently large \cite[Theorem 6]{GL13}. Therefore, we obtain that for those ideals $I$ with norm sufficiently large, the homological codes constructed from $M_{I}$ satisfy $$kd^{2}\geq c_{3}n^{1+0.4},$$

for some positive constant $c_{3}.$

\appendix
\section{}
\begin{center}
Cayo Doria and Plinio G. P. Murillo\\
\end{center}

Let $\Gamma$ be an arithmetic subgroup of the first type in $\Isom^{+}(\mathbb{H}^{n})$ defined over a totally real number field $k$, and denote by $\Gamma(I)$ the principal congruence subgroup associated to the ideal $I\subset\mathcal{O}_{k}$ (see the Section \ref{Arithmetic subgroups of Spin(1,n)} of this paper for the definitions). For two positive functions $f(x)$ and $g(x)$ we write $f(x)\gtrsim g(x)$ if for any $\epsilon >0$ there exists $x_{0}$ depending on $\epsilon$ such that $f(x)\geq (1-\epsilon)g(x)$ for all $x\geq x_{0}$. We write $f(x)\sim g(x)$ if $f(x)\gtrsim g(x)$ and $g(x)\gtrsim f(x)$. The purpose of this appendix is to prove the following result.

\begin{theorem}\label{appendix}
Up to passing to a commensurable group, for any sequence of prime ideals $I\subset\mathcal{O}_{k}$ the principal congruence subgroups $\Gamma(I)$ satisfy

$$\sys(M_{I})\lesssim \frac{8}{n(n+1)}\log(\vol(M_{I})),$$

where $M_{I}=\Gamma(I)\backslash\mathbb{H}^{n}$.
\end{theorem}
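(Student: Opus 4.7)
The plan is to reduce to dimension two by identifying a totally geodesic copy of $\mathbb{H}^{2}$ inside $\mathbb{H}^{n}$ stabilized by an arithmetic subgroup of the first type, and then to exhibit an explicit short closed geodesic in the corresponding congruence covering of that surface, which automatically lifts to a closed geodesic of $M_{\mathfrak{p}}$. After passing to a commensurable arithmetic subgroup, I may assume $\Gamma = \Spin_{f}(\mathcal{O}_{k})$ for a diagonal admissible quadratic form $f = a_{1}x_{1}^{2} - a_{2}x_{2}^{2} - \cdots - a_{n+1}x_{n+1}^{2}$, exactly as in the proof of Theorem \ref{maintheorem}. The restriction $g = f\big|_{V}$ to $V = \mathrm{span}(e_{1},e_{2},e_{3})$ is an admissible form of signature $(1,2)$, and the natural inclusion $\mathscr{C}^{+}(g) \hookrightarrow \mathscr{C}^{+}(f)$ embeds $\Gamma_{S} := \Spin_{g}(\mathcal{O}_{k})$ into $\Gamma$ as a lattice preserving a totally geodesic $\mathbb{H}^{2} \subset \mathbb{H}^{n}$. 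Because the Clifford basis of $\mathscr{C}^{+}(g)$ is a sub-basis of that of $\mathscr{C}^{+}(f)$, the congruence conditions of Section \ref{congruencesubgroups} yield $\Gamma_{S}(\mathfrak{p}) = \Gamma_{S} \cap \Gamma(\mathfrak{p})$, so closed geodesics of $S_{\mathfrak{p}} := \Gamma_{S}(\mathfrak{p})\backslash\mathbb{H}^{2}$ descend to closed geodesics of $M_{\mathfrak{p}}$ of the same length, and $\sys(M_{\mathfrak{p}}) \leq \sys(S_{\mathfrak{p}})$.

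The heart of the argument is a sharp two-dimensional construction. Since $g$ is split over $\mathbb{R}$, the even Clifford algebra $\mathscr{C}^{+}(g)$ is a quaternion algebra with $\Spin_{g}(\mathbb{R}) \cong \SL_{2}(\mathbb{R})$, and after a standard identification $\Gamma_{S}$ sits inside a group commensurable with $\SL_{2}(\mathcal{O}_{k})$. Using Minkowski's theorem applied to the lattices $\mathfrak{p} \subset \mathcal{O}_{k} \subset k\otimes\mathbb{R}$, I select $\pi \in \mathfrak{p}$ with $|\pi| \leq C_{1}\N(\mathfrak{p})$ and $|\sigma(\pi)|$ bounded uniformly in $\mathfrak{p}$ for every non-identity embedding $\sigma$, together with a nonzero $t \in \mathcal{O}_{k}$ having $|\sigma(t)| \leq C_{2}$ for all embeddings. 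The resulting element
\[
\gamma = \begin{pmatrix} 1 & \pi \\ \pi t & 1 + \pi^{2}t \end{pmatrix}
\]
has determinant one, off-diagonal entries in $\mathfrak{p}$, and $1 + \pi^{2}t - 1 \in \mathfrak{p}^{2}$, so $\gamma \in \Gamma_{S}(\mathfrak{p})$; its trace is $2 + \pi^{2}t$. Admissibility of $g$ forces $\sigma(\gamma) \in \Spin_{\sigma(g)}(\mathbb{R}) \cong \Spin(3)$ compact for $\sigma \neq \mathrm{id}$, so $|\sigma(\pi^{2}t)| \leq 4$ while in the identity embedding $|\pi^{2}t| \leq C\N(\mathfrak{p})^{2}$. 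For $\N(\mathfrak{p})$ large $\gamma$ is therefore hyperbolic, with translation length
\[
\ell(\gamma) = 2\arccosh\!\left(\tfrac{|\tr\gamma|}{2}\right) \leq 2\log|\tr\gamma| + O(1) \leq 4\log\N(\mathfrak{p}) + O(1).
\]
When $\mathfrak{p}$ is not principal, the same recipe goes through after scaling by representatives of a finite set of ideal classes, which is absorbed into the constants.

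Finally, I combine this with the asymptotic $\log\vol(M_{\mathfrak{p}}) = \tfrac{n(n+1)}{2}\log\N(\mathfrak{p}) + O(1)$, which follows from strong approximation for $\Spin_{f}$ (non-compact at the identity real place) together with $|\Spin_{\bar f}(\mathbb{F}_{q})| \sim q^{n(n+1)/2}$ (the complement of the upper bound used in Proposition \ref{finiteset}). The chain $\sys(M_{\mathfrak{p}}) \leq \sys(S_{\mathfrak{p}}) \leq \ell(\gamma) \leq 4\log\N(\mathfrak{p}) + O(1) = \tfrac{8}{n(n+1)}\log\vol(M_{\mathfrak{p}}) + O(1)$ then yields the claim, and shows that the multiplicative constant $\tfrac{8}{n(n+1)}$ in Theorem \ref{maintheorem} is sharp. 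The main technical obstacle is the calibrated Minkowski selection of $\pi$ and $t$ ensuring the trace perturbation $\pi^{2}t$ is simultaneously of optimal order $\N(\mathfrak{p})^{2}$ in the identity embedding and bounded in every other embedding, uniformly as $\mathfrak{p}$ varies; once this is in place, the rest is a routine combination of the preceding structural observations.
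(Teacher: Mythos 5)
Your dimension-reduction framework is the same as the paper's: pass to $\Gamma = \Spin_{f}(\mathcal{O}_{k})$ with $f$ diagonal, restrict $f$ to the ternary form $g$ on $\mathrm{span}(e_{1},e_{2},e_{3})$, get a totally geodesic, $\pi_{1}$-injective $S_{\mathfrak{p}} \hookrightarrow M_{\mathfrak{p}}$ with $\Gamma_{S}(\mathfrak{p}) = \Gamma_{S} \cap \Gamma(\mathfrak{p})$, so $\sys(M_{\mathfrak{p}}) \leq \sys(S_{\mathfrak{p}})$; and the volume asymptotic $\log\vol(M_{\mathfrak{p}}) \sim \tfrac{n(n+1)}{2}\log\N(\mathfrak{p})$ is also the one the paper uses. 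Where you diverge is the two-dimensional half: the paper invokes Makisumi's theorem (\cite[Theorem 1.6]{Mak13}) on systoles of congruence coverings of arithmetic Fuchsian groups arising from quaternion algebras, whereas you attempt to replace it with an explicit short hyperbolic matrix in a principal congruence subgroup of $\SL_{2}(\mathcal{O}_{k})$.

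That replacement has a genuine gap. The even Clifford algebra $A = \mathscr{C}^{+}(g,k)$ need not be $M_{2}(k)$, and $\Gamma_{S} = \Spin_{g}(\mathcal{O}_{k})$ need not be commensurable with $\SL_{2}(\mathcal{O}_{k})$. Your remark that ``$g$ is split over $\mathbb{R}$'' only gives $\Spin_{g}(\mathbb{R}) \cong \SL_{2}(\mathbb{R})$; it says nothing about $A$ being split over $k$. In fact, for every non-identity real embedding $\sigma$ the form $\sigma(g)$ is definite, so $A \otimes_{k,\sigma} \mathbb{R} \cong \mathbb{H}$, i.e.\ $A$ is ramified at every such place. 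Consequently, whenever $k \neq \mathbb{Q}$, $A$ is automatically a division algebra and there is no embedding of $\Gamma_{S}$ into any group commensurable with $\SL_{2}(\mathcal{O}_{k})$; even for $k = \mathbb{Q}$ the ternary restriction $g$ can easily be anisotropic (for example $x_{1}^{2} - 2x_{2}^{2} - 3x_{3}^{2}$). In those cases the matrix $\begin{pmatrix} 1 & \pi \\ \pi t & 1 + \pi^{2}t \end{pmatrix}$ does not live in $\Gamma_{S}(\mathfrak{p})$, and the Minkowski selection and trace estimate do not apply. This is exactly the point where Makisumi's local-global argument (strong approximation/Hensel lifting in the order $A(\mathcal{O}_{k})$) is needed: it produces a norm-one element of the order, congruent to the identity mod $\mathfrak{p}$, with reduced trace of size $O(\N(\mathfrak{p})^{2})$, without assuming $A$ is split. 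Your explicit $\SL_{2}$ construction is a correct special case of this, but it does not cover the cocompact situations the theorem is stated for, and those are the generic ones.
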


Comparing this result with Theorem \ref{maintheorem} we conclude that the multiplicative constant $\frac{8}{n(n+1)}$ is sharp. This result was proved by S. Makisumi in 2012 in dimension $n=2$ \cite{Mak13}. In that case, the arithmetic group $\Gamma$ can be constructed as integral points with norm one in a quaternion algebra over a totally real number field (see \cite[Section 10.2]{MR03} for more details about this approach). The problem of finding short closed geodesics translates to finding elements with norm equal to one in a quaternion algebra satisfying suitable modular conditions, and with small trace. To find some of these elements Makisumi appeals to number theoretic local-global techniques. 
We will combine this result with the geometry of $M_{I}$ to give a proof in any dimension.

\begin{proof}[Proof of Theorem \ref{appendix}]
By passing to a commensurable group we can suppose that $\Gamma=\Spin_{f}(\mathcal{O}_{k})$, where $f=a_1 x_1^2 - a_2x_2^2 - \cdots - a_{n+1}x_{n+1}^2$  is a diagonal quadratic form in the basis $\{e_{1}, e_{2},\ldots,e_{n+1}\}$ with $a_{i} > 0$ in $\mathcal{O}_{k}$, such that for any non-trivial Galois embeddings $\sigma: k \to \mathbb{R}$ we have $\sigma(a_1)>0 $ and $\sigma(a_i)<0 $ for $i=2,\cdots,n+1$. Let $E' \subset E$ be the 3-dimensional subspace generated by \{$e_1,e_2,e_3$\}. Note that the restriction $f':E' \to k $ of $f$ to $E'$ has signature $(1,2)$. In this case we have a natural inclusion $\Spin_{f'}(\mathcal{O}_{k}) \hookrightarrow \Gamma$ and $\Gamma'=\Spin_{f'}(\mathcal{O}_{k})$ is an arithmetic lattice of $\Spin_{f'}(\mathbb{R})$. Consider an isometric embedding of $\mathbb{H}^2$ to $\mathbb{H}^n$ equivariant for the respective actions of $\Gamma'$ and $\Gamma$ and the inclusion above. For any  ideal $I \subset \mathcal{O}_{k}$ we obtain a totally geodesic embedding  $$S_{I}  \hookrightarrow M_{I} $$ where $S_{I}= \Gamma'(I) \backslash \mathbb{H}^2$. In particular, this is a $\pi_1$-injective embedding and then
$$\sys(M_{I}) \leq \sys(S_{I}).$$
On the other hand, the even Clifford algebra $\mathscr{C}^{+}(f',k)$ is the quaternion algebra $A=\left( \frac{a,b}{k} \right)$ with $a=a_1a_2$ and $b=a_1a_3 \in \mathcal{O}_{k}$ (\textit{cf}. Section \ref{Clifford algebras and spin group}). Moreover the group $\Gamma'$ coincide with the group of units of the order $A(\mathcal{O}_{k})$ and  $\Gamma'(I)$ is the kernel of the projection map $ A(\mathcal{O}_{k}) \to A(\mathcal{O}_{k}/I)$. With this identification $\Gamma'$ acts on both of the models of the hyperbolic plane in such a way that acts on the  upper half space model via the quaternion algebra $A$ and acts on the hyperboloid model via $\Spin_{f'}(\mathbb{R})$. It is known that there exist an isometry between this two models which is equivariant for the actions of $\Gamma'$ \cite[Proposition 5.3]{EGM87}, 
and applying \cite[Theorem 1.6]{Mak13} to the sequence $S_{I}$ we have 

$$\sys(S_{I}) \lesssim \dfrac{4}{3} \log(\area(S_{I})).$$

We now follow some arguments as in \cite[Proposition 3.2]{Bel13}. 
For $\N(I)$ large enough $\vol(M_{I})=\nu|\Spin_{\bar{f}}(\mathcal{O}_{k}/I)|$, where 
$\nu=\vol(\Gamma\backslash\mathbb{H}^{n})$. Restricting to prime ideals $I$ the projection of the quadratic
form $f$ to the finite field $\mathcal{O}_{k}/I$ is non-degenerate and $|\Spin_{\bar{f}}(\mathcal{O}_{k}/I)|=|\SO_{\bar{f}}(\mathcal{O}_{k}/I)|$. By \cite[Sec. 3.7.2]{Wilson09} we have that  

$$ \vol(M_{I}) \sim \nu\N(I)^{\frac{n(n+1)}{2}}.$$

In the same way
$$  \area(S_{I}) \sim \mu\N(I)^3$$
 
 where $\mu=\area(\Gamma'\backslash\mathbb{H}^2).$ Follows from this that $$\log(\area(S_{I})) \sim \frac{6}{n(n+1)} \log(\vol(M_{I}))$$ and therefore $\sys(M_{I}) \lesssim \frac{8}{n(n+1)} \log(\vol(M_{I}))$.\qedhere

\end{proof}


\bibliographystyle{plain}
\bibliography{mybiblio}
\vspace{4mm}
Plinio G. P. Murillo\\
Universit\"at Bern, Mathematisches Institut, Sidlerstrasse 5, CH-3012 Bern, Switzerland.\\
\textit{E-mail address}:\hspace{2mm}\texttt{plinio.pino@math.unibe.ch}

\vspace{6mm}
Cayo Dória\\
IMPA. Estrada Dona Castorina 110, 22460-320, Rio de Janeiro, Brazil.\\
\textit{E-mail address}:\hspace{2mm}\texttt{crfdoria@impa.br}

\end{document}